\newcommand{\coana}{\mathbf{\Pi}^1_1}
\newcommand{\cl}{\mathrm{cl}}
\newcommand{\cantor}{2^\omega}
\newcommand{\ball}{\mathrm{ball}}
\newcommand{\Fin}{\mathrm{Fin}}
\newcommand{\fsigma}{\mathbf{\Sigma}^0_2}
\newcommand{\gdelta}{\mathbf{\Pi}^0_2}
\newcommand{\fsigmadelta}{\mathbf{\Pi}^0_3}
\newcommand{\Q}{\mathbb{Q}}
\newcommand{\RB}{\mathrm{RB}}
\newcommand{\baire}{\omega^\omega}
\newtheorem{theorem}{Theorem}[section]
\newtheorem{lemma}[theorem]{Lemma}
\newtheorem{corollary}[theorem]{Corollary}
\newtheorem{proposition}[theorem]{Proposition}
\newtheorem{claim}[theorem]{Claim}
\theoremstyle{definition}
\subjclass[2010]{03E15, 03E05}
\begin{document}

\title{Topological representations}

\author{Adam Kwela} 

\address{Instytut Matematyczny Polskiej Akademii Nauk,
  ul. \'Sniadeckich 8, 00-956 Warszawa, Poland}

\email{A.Kwela@impan.pl}

\author{Marcin Sabok}\thanks{This research was partially
  supported by the MNiSW (the Polish Ministry of Science and
  Higher Education) grant no. N201 418939 and by the
  Foundation for Polish Science.}

\address{Instytut Matematyczny Polskiej Akademii Nauk,
  ul. \'Sniadeckich 8, 00-956 Warszawa, Poland}
\address{Instytut Matematyczny Uniwersytetu Wroc\l awskiego,
  pl. Grunwaldzki 2/4, 50--384 Wroc\l aw, Poland}

\email{M.Sabok@impan.pl}

\date{}

\begin{abstract}
  This paper studies the combinatorics of ideals which
  recently appeared in ergodicity results for analytic
  equivalence relations. The ideals have the following
  topological representation. There is a separable
  metrizable space $X$, a $\sigma$-ideal $I$ on $X$ and a
  dense countable subset $D$ of $X$ such that the ideal
  consists of those subsets of $D$ whose closure belongs to
  $I$. It turns out that this definition is indepedent of
  the choice of $D$. We show that an ideal is of this form
  if and only if it is dense and countably separated. The
  latter is a variation of a notion introduced by Todor\v
  cevi\'c for gaps. As a corollary, we get that this class
  is invariant under the Rudin--Blass equivalence. This also
  implies that the space $X$ can be always chosen to be
  compact so that $I$ is a $\sigma$-ideal of compact
  sets. We compute the possible descriptive complexities of
  such ideals and conclude that all analytic equivalence
  relations induced by such ideals are $\fsigmadelta$. We
  also prove that a coanalytic ideal is an intersection of
  ideals of this form if and only if it is weakly selective.
\end{abstract}

%\keywords{weakly selective ideals}

\maketitle

\section{Introduction}

The aim of this paper is to reveal a connection between the
structure of ideals on countable sets and ideals of compact
sets in Polish spaces. A family of subsets of a given set is
an \textit{ideal} if it is closed under taking subsets and
finite unions. We always assume that an ideal of subsets of
a set $S$ contains all singletons $\{s\}$ for $s\in S$,
i.e. an ideal $J$ is an ideal of subsets of $\bigcup
J$. Given an ideal $J$, we say that a set is
\textit{$J$-positive} if it does not belong to
$J$. Sometimes, we write $J^+$ for the family of
$J$-positive sets and co-$J$ for the dual filter. Throughout
this paper we often identify subsets of $\omega$ with
elements of $\cantor$ via the characteristic
functions. Thus, for example, given an ideal of subsets of
$\omega$ we define its descriptive complexity as if it were
a subset of $\cantor$. On the other hand, given an ideal of
compact subsets of a given Polish space $X$ we refer to its
descriptive complexity in the Vietoris space $K(X)$.

The study of definable ideals of compact sets has become a
classical subject in descriptive set theory. A well-known
result of Kechris, Louveau and Woodin \cite{klw} and
Dourgherty, Kechris and Louveau (see \cite{k2}) says that an
analytic ideal of compact sets is a $\sigma$-ideal if and
only if it is $\gdelta$. The descriptive complexity of more
complicated ideals of compact sets is the subject of a
trichotomy theorem proved by Matheron, Solecki and Zelen\'y
\cite{msz}. Recently, Solecki \cite{sol.jems} described a
special class of $\gdelta$ $\sigma$-ideals of compact sets
and proved several structure theorems representing ideals in
that class via the meager ideal (see also
\cite{kechris.solecki}).

The structure of ideals on countable sets has been studied
from a different perspective but some results reveal the
similarities. An analogy to the Kechris--Louveau--Woodin
theorem appears in the work of Solecki \cite{solecki.p} on
analytic $P$-ideals, where it is shown that if a $P$-ideal
is analytic, then its descriptive complexity is
$\fsigmadelta$. Solecki also shows \cite[Corollary
3.4]{solecki.bull} that if $J$ is an analytic $P$-ideal then
it is either $\fsigma$ or $\fsigmadelta$-complete. The
structure of ideals on $\omega$ is often described in terms
of the \textit{Rudin--Blass order}. Given two ideals $J,K$
on $\omega$ we say that $J$ is \textit{Rudin--Blass below}
$K$ and write $J\leq_{\RB}K$ if there is a finite-to-one
function $f:\omega\rightarrow\omega$ such that $a\in J$ if
and only if $f^{-1}(a)\in K$, for every
$a\subseteq\omega$. The Jalali-Naini--Mathias--Talangrand
theorem \cite[Theorem 4.1.2]{bartoszynski} then says that
every ideal with the Baire property is Rudin--Blass above
the ideal $\Fin$ of finite sets.

A connection between ideals of compact sets and ideals on
countable sets that appears in this paper uses the following
operation. Suppose $X$ is a separable metrizable space and
$I$ is a $\sigma$-ideal on $X$ that contains all
singletons. Given a dense countable set $D\subseteq X$,
define the ideal $J_I$ on $D$ as the family $\{a\subseteq D:
\cl(a)\in I\}$. Obviously, $J_I$ depends only on the family
of closed sets that belong to $I$. In principle, $J_I$ also
depends on the set $D$, which is equal to $\bigcup J_I$, but
we will see (Proposition \ref{independence}) that, up to
isomorphism, this definition is independent of the choice of
$D$. The ideals of the form $J_I$ have been recently studied
in \cite{sz} and used in canonization (see \cite{ksz}) of
smooth equivalence relations for $\sigma$-ideals generated
by closed sets. Given an ideal $J$ on a countable set $E$ we
say that $J$ \textit{has a topological representation} if
there are $I,D,X$ as above and a bijection
$\rho:E\rightarrow D$ such that $J=\{a\subseteq E:
\rho(a)\in J_I\}$. In such a case we say that $J$ is
\textit{represented by} $I$.

Two examples of ideals with topological representations have
been studied by Farah and Solecki \cite{farah.solecki}, who
showed that there are at least two isomorphism types of such
ideals (namely that the ideals represented by the meager
sets and by the meager null sets are not isomorphic).

The study of ideals on $\omega$ is closely connected and
largely motivated by the study of equivalence relations on
$\cantor$ given in the following way. Given an ideal $J$ on
$\omega$ we write $E_J$ for the equivalence relation on
$\cantor$ with $x\mathrel{E_J}y$ if $x\Delta y\in
J$. Rosendal \cite{rosendal} proved that any Borel
equivalence relation is Borel-reducible to one of the form
$E_J$. A motivation for the results in this paper is a
recent work of Zapletal \cite{zapletal}, who shows that if
$J$ has a topological representation, then the equivalence
relation $E_J$ has the following ergodicity property. First,
every Borel homomorphism from a turbulent equivalence
relation $F$ to $E_J$ maps a comeager set to a single
$E_J$-equivalence class. Second, if $J$ is represented by a
$\mathbf{\Pi}^0_2$ $\sigma$-ideal, then every homomorphism
from $E_J$ to an equivalence relation classifiable by
countable structures maps a measure 1 set to a single
equivalence class. This is in contrast with the turbulence
dichotomy of Hjorth \cite{hjorth}. Note that if $J$ is has a
topological representation, then it is not a $P$-ideal and
hence $E_J$ is not induced by a Polish group action by the
Solecki characterization of Polishable $P$-ideals
\cite[Corollary 4.1]{solecki.p}.

The main result of this paper establishes a combinatorial
characterization of ideals which admit topological
representations. One of the necessary conditions says that
the ideal is \textit{dense}, i.e. any infinite set contains
an infinite subset that belongs to the ideal. The other
condition is a variation of Todor\v cevi\'c's notion of
countably separated gaps \cite{todorcevic.gaps} (see also
\cite{farah.mems, farah.hausdorff, todorcevic.aviles}). We
say that an ideal $J$ on a countable set $D$ is
\textit{countably separated} if there is a countable family
$\{a_n:n<\omega\}$ of subsets of $D$ such that for any
$a,b\subseteq D$ with $a\in J$ and $b\notin J$ there is
$n\in\omega$ with $a\cap a_n=\emptyset$ and $b\cap a_n\notin
J$. In such a case we say that the family $\{a_n:n<\omega\}$
\textit{separates} $J$. We prove the following
characterization.

\begin{theorem}\label{char} 
  For any ideal $J$ on a countable set the following are
  equivalent:
  \begin{itemize}
  \item[(i)] $J$ is dense and countably separated,
  \item[(ii)] $J$ has a topological representation.
  \end{itemize}
\end{theorem}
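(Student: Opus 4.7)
The plan is to treat the two implications separately; the easy one is $(ii)\Rightarrow(i)$, and the substantial work lies in $(i)\Rightarrow(ii)$.

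For $(ii)\Rightarrow(i)$, suppose $J$ is represented by $(X,I,D)$. Density is almost immediate: given an infinite $a\subseteq D$, either $a$ has no limit point in $X$ (so $a$ is closed and discrete and $\cl(a)=a$ is countable, hence in $I$), or some sequence $b\subseteq a$ converges to a point $x\in X$ and $\cl(b)=b\cup\{x\}$ is again countable and in $I$; either way one finds an infinite $b\subseteq a$ in $J$. For countable separation I would fix a countable basis $\{V_n\}$ of $X$ and take $a_n=D\cap V_n$. Given $a\in J$ and $b\notin J$, observe that $\cl(b)\setminus\cl(a)\notin I$ (otherwise $\cl(b)\in I$); writing $X\setminus\cl(a)=\bigcup\{V_n:V_n\subseteq X\setminus\cl(a)\}$ and using $\sigma$-additivity of $I$, some such $V_n$ satisfies $V_n\cap\cl(b)\notin I$. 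For this $n$, $V_n\cap\cl(a)=\emptyset$ gives $a\cap a_n=\emptyset$, while openness of $V_n$ gives $V_n\cap\cl(b)\subseteq\cl(V_n\cap b)=\cl(b\cap a_n)$, so $b\cap a_n\notin J$.

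For $(i)\Rightarrow(ii)$, suppose $J$ on $D$ is dense and separated by $\{a_n\}$. I would first enlarge the family by adjoining the singletons $\{d\}$, $d\in D$; since a singleton never witnesses $b\cap a_n\notin J$, this preserves separation and it makes the map $\phi\colon D\to\cantor$, $\phi(d)(n)=1$ iff $d\in a_n$, injective. Set $X=\cl(\phi(D))\subseteq\cantor$ (a zero-dimensional Polish space with $\phi(D)$ as a countable dense subset) and let $I$ be the $\sigma$-ideal on $X$ generated by $\{\cl(\phi(b)):b\in J\}$. Every singleton $\{x\}\subseteq X$ belongs to $I$: if $x=\phi(d)$ then $\{x\}=\cl(\phi(\{d\}))\in I$, and if $x\in X\setminus\phi(D)$ then $x$ is the limit of some $\phi(d_n)$ with distinct $d_n\in D$, and density of $J$ produces an infinite subsequence $\{d_{n_k}\}\in J$ converging to $x$, placing $\{x\}$ inside $\cl(\phi(\{d_{n_k}\}))\in I$. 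The representation then reduces to the biconditional $\cl(\phi(a))\in I\Leftrightarrow a\in J$, whose direction $\Leftarrow$ is trivial.

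For the direction $\Rightarrow$, suppose $\cl(\phi(a))\subseteq\bigcup_n\cl(\phi(b_n))$ with $b_n\in J$ and, aiming for a contradiction, that $a\notin J$. I would construct $x\in\cl(\phi(a))\setminus\bigcup_n\cl(\phi(b_n))$ recursively, producing $t_0\subsetneq t_1\subsetneq\cdots$ in $2^{<\omega}$ maintaining the invariants $a\cap A_{t_k}\notin J$ and $b_j\cap A_{t_k}=\emptyset$ for all $j<k$, where $A_s=\bigcap_{i<|s|}a_i^{s(i)}$ with $a_i^1=a_i$ and $a_i^0=D\setminus a_i$. At stage $k$, countable separation applied to $b_k\in J$ and $a\cap A_{t_k}\notin J$ yields some $a_m$ with $b_k\cap a_m=\emptyset$ and $a\cap A_{t_k}\cap a_m\notin J$; extending $t_k$ so that the $m$-th coordinate is fixed to $1$ (the intervening coordinates chosen via pigeonhole on the finite partition of $A_{t_k}\cap a_m$ into cylinders, using that $J$ is closed under finite unions) yields $t_{k+1}$ satisfying both invariants. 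The point $x=\bigcup_k t_k$ then lies in $X$ and in $\cl(\phi(a))$ since every initial segment of $x$ is met by $\phi(a)$, while the basic clopen $\{y\in X:y\supseteq t_{k+1}\}$ isolates $x$ from $\phi(b_k)$, yielding the required contradiction.

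The principal obstacle is this recursive construction: at every stage one must simultaneously advance the initial segment, kill the $k$-th set in the alleged cover, and preserve the invariant that $a$ still meets the current cylinder in a non-$J$ set; countable separation is used in an essential way at each step. Density of $J$, by contrast, is only needed to guarantee that $I$ contains the singletons corresponding to the limit points of $\phi(D)$.
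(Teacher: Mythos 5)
Your direction (ii)$\Rightarrow$(i) is essentially the paper's argument: density via extracting a convergent subsequence (whose closure is countable, hence in $I$), and countable separation via the traces $D\cap V_n$ of basic open sets together with $\sigma$-additivity of $I$; the inclusion $V_n\cap\cl(b)\subseteq\cl(V_n\cap b)$ for open $V_n$ is the same key observation.

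Your direction (i)$\Rightarrow$(ii) is correct but takes a genuinely different route. The paper first \emph{improves} the separating family (its Lemma \ref{improved}) so that it separates points and has all Boolean combinations infinite or empty; this makes the induced zero-dimensional topology on $D$ perfect, hence homeomorphic to $\Q$, which is then realized as $\Q\cap\cantor$. The paper then defines $I$ \emph{directly} as $\{A\in K(\cantor):\exists a\in J,\ A\subseteq\cl(a)\}$ and proves, via a compactness argument combined with separation, that this family is already a $\sigma$-ideal of compact sets; once that is in hand, $J=J_I$ follows quickly. You instead embed $D$ into $\cantor$ via the characteristic-function map of the raw separating family augmented by singletons, set $X=\cl(\phi(D))$, and take $I$ to be the \emph{generated} $\sigma$-ideal, which trivializes the $\sigma$-ideal question but relocates the work to showing that a countable union $\bigcup_n\cl(\phi(b_n))$, $b_n\in J$, cannot cover $\cl(\phi(a))$ when $a\notin J$. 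Your fusion construction (building $x=\bigcup_kt_k$ while maintaining $a\cap A_{t_k}\notin J$ and $b_j\cap A_{t_k}=\emptyset$ for $j<k$) is an independently valid way to finish. Your route avoids both the family-improvement lemma and the $\Q$-identification, so it is somewhat more elementary; what the paper's route buys is the stronger conclusion that the representation lives on $\cantor$ with dense set $\Q\cap\cantor$, which is exploited afterward (the remark following Corollary \ref{rudin-blass} and in Theorem \ref{meager}). One spot worth a sentence in your recursion: after separation yields $a_m$ with $b_k\cap a_m=\emptyset$ and $a\cap A_{t_k}\cap a_m\notin J$, your description (``fix the $m$-th coordinate to $1$, choose intervening coordinates by pigeonhole'') tacitly assumes $m\geq|t_k|$; if $m<|t_k|$, positivity of $a\cap A_{t_k}\cap a_m$ forces $t_k(m)=1$ already, so $A_{t_k}\subseteq a_m$ and the $b_k$-invariant is automatic, and one only needs to extend by one coordinate, chosen by pigeonhole to keep $a\cap A_{t_{k+1}}\notin J$. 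Not a gap, just worth making explicit.
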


As a corollary we get the following

\begin{corollary}\label{rudin-blass}
  The class of ideals which have topological representations
  is invariant under the Rudin--Blass equivalence.
\end{corollary}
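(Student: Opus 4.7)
The plan is to derive the corollary from Theorem~\ref{char} by checking that each of the two defining properties, density and countable separation, is preserved under the Rudin--Blass equivalence. Since the equivalence is symmetric, it suffices to show the following: if $J \leq_{\RB} K$ is witnessed by a finite-to-one $f \colon \omega \to \omega$ and $K$ is dense and countably separated, then so is $J$.

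Countable separation transfers by a short combinatorial calculation. Let $\{a_n : n < \omega\}$ separate $K$; I claim that $\{f(a_n) : n < \omega\}$ separates $J$. Given $a \in J$ and $b \notin J$, the definitions yield $f^{-1}(a) \in K$ and $f^{-1}(b) \notin K$, so some $a_n$ witnesses $f^{-1}(a) \cap a_n = \emptyset$ together with $f^{-1}(b) \cap a_n \notin K$. The first condition is equivalent to $a \cap f(a_n) = \emptyset$, and since $f^{-1}(b \cap f(a_n)) \supseteq f^{-1}(b) \cap a_n$, the second forces $b \cap f(a_n) \notin J$.

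For density, take an infinite $b \subseteq \omega$. If $b \setminus f(\omega)$ is infinite it already lies in $J$, so assume $f^{-1}(b) = \bigcup_{n \in b} F_n$ is infinite, where $F_n = f^{-1}(\{n\})$. The task reduces to producing an infinite $b' \subseteq b$ with $\bigcup_{n \in b'} F_n \in K$. I would enumerate each fiber as $e^n_1, \dots, e^n_{|F_n|}$ and peel off layers: at stage $i$ apply density of $K$ to the infinite set $\{e^n_i : n \in b_{i-1},\ |F_n| \geq i\}$ to obtain an infinite $b_i \subseteq b_{i-1}$ on which the $i$-th layer lies in $K$. When fiber sizes are bounded by some $k$, the finite union $\bigcup_{i \leq k} \{e^n_i : n \in b_k\}$ equals $\bigcup_{n \in b_k} F_n$ and lies in $K$ by the ideal property, so $b_k$ is the required witness. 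For unbounded fibers I would work in a compactification $\hat X$ of $X$, pass to a subsequence along which the family $\{F_n\}$ converges in the Hausdorff metric to a compact set $F^* \subseteq \hat X$, and use that the representing $\sigma$-ideal $I$ contains every countable set together with density of $K$ to arrange $F^* \cap X \in I$. The decomposition $\cl_X(\bigcup F_n) = \bigcup F_n \cup (F^* \cap X)$ then places the closure in $I$.

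The main obstacle is precisely the unbounded-fiber case of the density argument, where the combinatorial peeling does not terminate and one must use the topological content of $K$ having a representation in an essential way, namely compactness of a Polish extension of $X$ and the fact that the $\sigma$-ideal $I$ absorbs countable sets, in order to control the Hausdorff limit of the family of fibers and hence the closure of their union.
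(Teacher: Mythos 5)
Your countable-separation argument is correct and matches the paper's: with $\{a_n\}$ separating $K$ and $J\leq_\RB K$ via $f$, the images $f''a_n$ separate $J$.

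The density half of your plan, however, attempts to prove something false. You reduce the corollary to the claim that $J\leq_\RB K$ with $K$ dense and countably separated forces $J$ to be dense; that is, you try to pass density \emph{down} the Rudin--Blass order. Density does not pass down. For a concrete counterexample, let $X=\cantor$, $D=\Q\cap\cantor$, let $I$ be the $\sigma$-ideal of countable subsets of $\cantor$, and set $K=J_I$; this $K$ is dense and countably separated. Partition $D$ into finite sets $F_n$ so that each $F_n$ is a $2^{-n}$-net of $\cantor$, and let $f:D\to\omega$ have fibers $f^{-1}(n)=F_n$. Then $f$ is finite-to-one, witnessing $J\leq_\RB K$ for $J=\{a\subseteq\omega:f^{-1}(a)\in K\}$. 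But for any infinite $a\subseteq\omega$ the set $\bigcup_{n\in a}F_n$ is dense in $\cantor$, so its closure is all of $\cantor$ and lies outside $I$; hence $J=\Fin$, which is not dense. (More generally, the Jalali-Naini--Mathias--Talagrand theorem gives $\Fin\leq_\RB K$ for any $K$ with the Baire property, while $\Fin$ is not dense.) The unbounded-fiber difficulty you flag at the end is therefore not a technical gap awaiting a compactification argument; it is where the claimed implication genuinely fails. The paper's proof avoids this by using both inequalities in the equivalence $J\equiv_\RB K$ and transporting each property only in the direction in which it actually moves: countable separation goes from $K$ down to $J$ along $J\leq_\RB K$, exactly as you argued, while density goes from $J$ up to $K$ along $J\leq_\RB K$. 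The latter is the easy direction: for infinite $b\subseteq\omega$, the image $f''b$ is infinite since $f$ is finite-to-one, density of $J$ gives an infinite $c'\subseteq f''b$ with $c'\in J$, and then $f^{-1}(c')\cap b$ is an infinite subset of $b$ lying in $K$ because it is contained in $f^{-1}(c')\in K$.
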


\begin{proof}
  It is enough to show that if $J\leq_\RB K$ and $K$ is
  countably separated, then $J$ is countably separated, and
  if $J$ is dense, then $K$ is dense. Let
  $f:\omega\rightarrow\omega$ be a Rudin--Blass reduction
  witnessing $J\leq_\RB K$.

  Suppose first that $K$ is countably separated by
  $\{c_n:n<\omega\}$ and let $d_n=f''c_n$. We claim that
  $d_n$'s witness that $J$ is countably separated. Indeed,
  take $a\in J$ and $b\notin J$. Then $a'=f^{-1}(a)\in K$
  and $b'=f^{-1}(b)\notin K$. Pick $n$ such that $c_n\cap
  a'=\emptyset$ and $c_n\cap b'\notin K$. Then $d_n\cap
  a=\emptyset$ and $d_n\cap b\notin K$.

  Suppose now that $J$ is dense and let $b\subseteq\omega$
  be infinite. Since $f$ is finite-to-one, $b'=f''b$ is also
  infinite and hence there is $c'\subseteq b'$ with $c'\in
  J$. Let $c=f^{-1}(c')\cap b$ and note that $c$ is an
  infinite subset of $b$ which belongs to $K$, as a subset
  of $f^{-1}(c')$.
\end{proof}

Corollary \ref{rudin-blass} in particular implies that
ideals with topological representations are invariant under
$\equiv_\RB^{++}$ (see \cite[Section 3.2]{kanovei}) and
hence the class of equivalence relations of the form $E_J$,
for $J$ with a topological representation, is invariant
under additive Borel reductions, by a result of Farah
\cite{farah.additive}.

The proof of Theorem \ref{char} shows that if $J$ on
$\omega$ has a topological representation, then it can be
represented on the Cantor space via an identification of
$\omega$ with the set of rationals in the Cantor space. The
following shows that there is some extent of control over
the sets in the $\sigma$-ideal that witness that $J$ is
represented on the Cantor space.

\begin{theorem}\label{meager}
  If $J$ has a topological representation, then it is
  represented on the Cantor space by a $\sigma$-ideal
  generated by a family of compact nowhere dense sets.
\end{theorem}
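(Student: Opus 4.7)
The plan is to use the separating family from Theorem~\ref{char} directly to define an injection $\phi:\omega\to\cantor$ with dense image, and then to take $I'$ to be the $\sigma$-ideal on $\cantor$ generated by the closures of the images of $J$-sets under $\phi$. The key point will be that the separating family can be arranged so that every $J$-set has an image contained in a closed subset of $\cantor$ pinning infinitely many coordinates to zero, and hence nowhere dense.

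By Theorem~\ref{char}, fix a countable separating family $\{b_m:m<\omega\}$ for $J$. Replacing it by an enumeration in which each $b_m$ repeats infinitely often preserves the separation property and makes the set $M_a:=\{m:b_m\cap a=\emptyset\}$ infinite for every $a\in J$. I then augment the family with the positive sets $\omega\setminus\{k\}$ for every $k\in\omega$ (so that the evaluation map $\phi(k)(m):=\chi_{b_m}(k)$ becomes injective) and by a further enlargement I arrange that the image $\phi(\omega)$ is dense in $\cantor$. With this $\phi$, for every $a\in J$ the image $\phi(a)$ is contained in $\bigcap_{m\in M_a}\{x\in\cantor:x(m)=0\}$, a closed subset of $\cantor$ with infinitely many coordinates pinned to zero, which is nowhere dense; hence $\cl_{\cantor}(\phi(a))$ is compact and nowhere dense. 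Let $I'$ denote the $\sigma$-ideal on $\cantor$ generated by $\{\cl(\phi(a)):a\in J\}$; then $I'$ is generated by compact nowhere dense sets, and $\cl(\phi(a))\in I'$ for every $a\in J$.

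The reverse direction is the heart of the argument. Suppose $a\notin J$ and, for contradiction, $\cl(\phi(a))\subseteq\bigcup_n\cl(\phi(c_n))$ for some $c_n\in J$. I construct recursively a decreasing chain $a=a_0\supseteq a_1\supseteq\cdots$ of positive sets and indices $m_0,m_1,\ldots$ as follows: at stage $n$, countable separation applied to $c_n\in J$ and $a_n\notin J$ gives $m_n$ with $b_{m_n}\cap c_n=\emptyset$ and $b_{m_n}\cap a_n\notin J$, and I set $a_{n+1}:=b_{m_n}\cap a_n$. Picking distinct $k_n\in a_{n+1}$ and a convergent subsequence $\phi(k_{n_j})\to x$ in the compact space $\cantor$, the limit $x$ lies in $\cl(\phi(a))$ and satisfies $x(m_n)=1$ for every $n$ (since eventually $k_{n_j}\in b_{m_n}$, and the clopen set $\{y:y(m_n)=1\}$ detects this). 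But $b_{m_n}\cap c_n=\emptyset$ forces $\cl(\phi(c_n))\subseteq\{y\in\cantor:y(m_n)=0\}$, so $x$ avoids $\bigcup_n\cl(\phi(c_n))$, a contradiction.

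The step I expect to be the main obstacle is the combinatorial enrichment guaranteeing that $\phi(\omega)$ is dense in $\cantor$: this requires adding positive sets to the separating family so that every boolean atom of finitely many members is realized in $\omega$, which is a delicate matter. If this cannot be done directly, one can instead take the representation on the closed subspace $X=\cl_{\cantor}(\phi(\omega))$ and argue separately (by yet another enlargement making the tree of atoms have no dead ends and no singletons) that $X$ is homeomorphic to the Cantor space, after which the same argument goes through verbatim inside $X$.
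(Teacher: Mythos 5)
Your strategy, using the separating family as coordinates of an embedding $\phi\colon\omega\to\cantor$ and pinning infinitely many coordinates to $0$ on each $J$-set, is the right general shape and is in the same spirit as the paper's construction of a new topology $\tau$ on $\omega$. Your reverse-direction argument (ruling out covers of $\cl(\phi(a))$ for $a\notin J$ by descending through the separating family and extracting a limit point with $x(m_n)=1$) is correct; and the worry about singleton generators of $I'$ is a non-issue, since density of $J$ and density of $\phi(\omega)$ in $X:=\cl(\phi(\omega))$ already put every $\{x\}$ inside some $\cl(\phi(c))$ with $c\in J$. However, there is a genuine gap in the nowhere-density step, and it is precisely the point where the paper has to work hardest.

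The pinned-coordinates observation shows that $\cl(\phi(a))$ is nowhere dense in $\cantor$, but what you need is that it is nowhere dense in $X=\cl(\phi(\omega))$, and those are different statements unless $\phi(\omega)$ is dense in all of $\cantor$. Your fallback claim that ``the same argument goes through verbatim inside $X$'' is false: if some nonempty Boolean atom $A$ of finitely many $b_m$'s happens to lie in $J$ (which Lemma~\ref{improved} allows — it only makes atoms empty or infinite, not empty or $J$-positive), then $\cl(\phi(A))=[s]\cap X$ is a nonempty clopen subset of $X$ which is a generator of $I'$, so $I'$ is not generated by nowhere dense subsets of $X$. Moreover, your proposed fix of adjoining the sets $\omega\setminus\{k\}$ to the separating family actively breaks things: the corresponding coordinate isolates the point $\phi(k)$ in $X$, so $X$ fails to be perfect. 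And ``enlarging'' the family to make $\phi(\omega)$ dense in $\cantor$ is not generally possible either, since empty atoms of an already-chosen finite subfamily cannot be repaired by adding further sets. What the argument really needs is that every nonempty Boolean atom of the coordinate family is $J$-positive, not merely infinite. This is exactly what the paper's Lemma~\ref{separation} (splitting $J$-positive clopen sets into two disjoint $J$-positive clopen pieces), Lemma~\ref{openclopen}, and the promise/bookkeeping construction in the proof of Theorem~\ref{meager} deliver, via condition~(iii) (all nonempty atoms are $J$-positive) and condition~(v). Without that strengthening, the nowhere-density claim fails and the proposal does not prove the theorem.
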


In partiular, the family representing an ideal can be chosen
to consist of compact sets. For definable ideals, this
implies computations of possible descriptive complexities.

\begin{theorem}\label{complex}
  If an ideal $J$ has a topological representation and it is
  analytic, then it is $\mathbf{\Pi}^0_3$-complete. In
  particular, if $E_J$ is analytic, then it is
  $\mathbf{\Pi}^0_3$.
\end{theorem}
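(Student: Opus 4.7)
The plan is to derive the $\mathbf{\Pi}^0_3$ upper bound from Theorem~\ref{meager} combined with the Kechris--Louveau--Woodin / Dougherty--Kechris--Louveau theorem quoted in the introduction, and to obtain the matching $\mathbf{\Pi}^0_3$-hardness by a direct continuous reduction from a standard complete example. The ``in particular'' clause is then immediate.

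Use Theorem~\ref{meager} to represent $J$ on $\cantor$ by a $\sigma$-ideal $I$ generated by a family $\mathcal{F}$ of compact nowhere dense sets, with $D\subseteq\cantor$ the corresponding dense countable set, and let $\widehat{I}=I\cap K(\cantor)$, a $\sigma$-ideal of compact sets. The first step is to show $\widehat{I}$ is analytic; this requires relating it back to $J$, the subtlety being that a compact $K\in I$ need not be of the form $\cl(a)$ for any $a\in J$. One approach is to use the Cantor--Bendixson perfect kernel together with the fact that singletons lie in $I$, reducing the membership question to perfect compact subsets and reading it off from $J$. Granted analyticity of $\widehat I$, the KLW--DKL theorem then yields that $\widehat{I}$ is $\gdelta$ in $K(\cantor)$. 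Next, analyze the closure map $\cl\colon 2^D\to K(\cantor)$: the subbasic Vietoris opens ``$\cl(a)\cap[s]\neq\emptyset$'' pull back to the open conditions ``$a\cap[s]\neq\emptyset$'' on $a$, while ``$\cl(a)\subseteq U$'' reduces, by compactness of $\cl(a)$ together with the clopenness of basic neighbourhoods in $\cantor$, to a countable disjunction of closed conditions $a\subseteq[s_{i_1}]\cup\ldots\cup[s_{i_k}]$, and so is $\fsigma$. Hence open subsets of $K(\cantor)$ pull back to $\fsigma$ subsets of $2^D$, and $\gdelta$ sets pull back to $\fsigmadelta=\mathbf{\Pi}^0_3$, whence $J=\cl^{-1}(\widehat{I})\in\mathbf{\Pi}^0_3$.

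The $\mathbf{\Pi}^0_3$-hardness is the main obstacle. The plan is to construct a continuous reduction of the $\mathbf{\Pi}^0_3$-complete ideal $\emptyset\times\Fin$ on $\omega\times\omega$ into $J$. Using density of $J$ together with the topological representation, pick a perfect $F\in\mathcal{F}$ with $F\cap D$ dense in $F$, and inductively build pairwise disjoint infinite sets $A_{n,k}\subseteq D$ in $J$ such that every finite per-column selection has closure contained in a single $I$-generator (and hence lies in $J$), while any infinite per-column selection forces the closure to escape every countable union of generators. The reduction is then $a\mapsto\bigcup_{(n,k)\in a}A_{n,k}$; the delicate engineering of the $A_{n,k}$ so that infinite-selection closures actually leave $I$ draws on the explicit information about the representation furnished by the proof of Theorem~\ref{meager}.

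For the ``in particular'' clause, if $E_J$ is analytic then $J$, being the $E_J$-class of $\emptyset$, is an analytic section of $E_J$ and hence $\mathbf{\Pi}^0_3$ by the main bound; since the symmetric difference $\Delta\colon\cantor\times\cantor\to\cantor$ is continuous and $E_J=\Delta^{-1}(J)$, it follows that $E_J\in\mathbf{\Pi}^0_3$.
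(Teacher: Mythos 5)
Your upper bound is mostly sound, but the proposed route to showing the $\sigma$-ideal $\widehat I$ is analytic has a genuine problem. The paper sidesteps the issue you raise (that a compact $K\in I$ need not equal $\cl(a)$ for any $a\in J$) simply by observing that the $\sigma$-ideal of compact sets generated by $\{\cl(a):a\in J\}$ \emph{is} the set $\{A\in K(X):\exists b\in J\ A\subseteq\cl(b)\}$ --- this was already proved to be a $\sigma$-ideal in the proof of Theorem~\ref{char} --- and the latter is visibly analytic as a projection of an analytic set. Your alternative via the Cantor--Bendixson perfect kernel does not obviously work: for a perfect compact $K$ there is no reason $K\cap D$ should be dense in $K$ (take $K=\cantor\setminus\Q$), so you cannot ``read off'' membership of $K$ in $I$ from $J$ in the way you suggest. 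Once analyticity is in hand, your analysis of the closure map $\cl\colon 2^D\to K(\cantor)$ is correct and amounts to the Baire-class-$1$ fact the paper cites, so the $\mathbf{\Pi}^0_3$ upper bound goes through.

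The hardness half is where the real gap lies. Reducing from $\emptyset\times\Fin$ is the right target, but the construction you sketch does not contain the two ideas that make the paper's argument work. First, the paper does not build the sets $A_{n,k}$ by hand from a single perfect generator; it fixes a point $x\in X$, restricts $J$ to shrinking balls $V_n$ around $x$ to obtain ideals $J^n$, and invokes the Jalali-Naini--Mathias--Talagrand lemma (every analytic ideal is $\fsigma$-hard) to get reductions $\phi_n$ of $\Fin$ into each $J^n$. Your plan to ``inductively build pairwise disjoint infinite sets $A_{n,k}\subseteq D$ in $J$'' with the stated closure properties has no analogue of this; you give no mechanism producing the $\fsigma$-hardness column by column. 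Second, and more seriously, you do not control the closure of a union across columns. If every column contributes a set whose closure is in $I$, the closure of the \emph{union} may still pick up limit points spread all over the space, and you provide no reason it remains in $I$. The paper's choice of nested balls $V_n\searrow\{x\}$ is precisely what forces any extra limit point to be $x$ itself, a singleton and hence in $I$. Without some such confining device the forward direction of your reduction fails. Your closing appeal to ``delicate engineering... [drawing] on the explicit information about the representation'' is an acknowledgement rather than an argument. Also, nothing guarantees the existence of a perfect $F\in\mathcal{F}$ with $F\cap D$ dense in $F$: the generating family could consist entirely of countable compact sets.

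The ``in particular'' clause is handled correctly: $J$ is the $E_J$-class of $\emptyset$, hence analytic, hence $\mathbf{\Pi}^0_3$ by the main bound, and $E_J$ is the preimage of $J$ under the continuous symmetric-difference map.
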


This gives an analogoue of the Kechris--Louveau--Woodin
dichotomy.

\begin{corollary}\label{klw}
  If a coanalytic ideal $J$ has a topological
  representation, then $J$ is either
  $\mathbf{\Pi}^0_3$-complete, or
  $\mathbf{\Pi}^1_1$-complete. $J$ is
  $\mathbf{\Pi}^0_3$-complete if and only if it is
  represented by a $\mathbf{\Pi}^0_2$ ideal of compact sets.
\end{corollary}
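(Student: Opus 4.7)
My plan is to split the corollary into its two assertions and handle the second (the equivalence characterizing the $\fsigmadelta$-complete case) first, since the dichotomy will follow from it together with a classical dichotomy for coanalytic $\sigma$-ideals of compact sets.

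For the equivalence, the ``if'' direction is a complexity pullback. If $J$ is represented by a $\gdelta$ $\sigma$-ideal $I$ of compact sets in $K(X)$, then $J = \cl^{-1}(I)$ for the closure map $\cl : 2^D \to K(X)$. Preimages of basic Vietoris opens under $\cl$ are $\fsigma$ in $2^D$: given an open $U \subseteq X$, the condition $\cl(a) \subseteq U$ is equivalent, by compactness of $X \setminus U$, to the existence of finitely many basic open sets disjoint from $a$ covering $X \setminus U$, while $\cl(a) \cap V \neq \emptyset$ is equivalent to $a \cap V \neq \emptyset$. Hence the $\cl$-preimage of a $\gdelta$ set is $\fsigmadelta$, making $J$ analytic (indeed $\fsigmadelta$), and Theorem \ref{complex} upgrades this to $\fsigmadelta$-completeness.

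For the ``only if'' direction, $J$ is in particular Borel, and by Theorem \ref{meager} can be realized as an ideal on $\Q \subseteq \cantor$ represented by a $\sigma$-ideal generated by compact nowhere dense sets. The natural candidate for a $\gdelta$ representing ideal is
\[I_J = \{K \in K(\cantor) : K \cap \Q \in J\}.\]
I would verify that $I_J$ is a $\sigma$-ideal of compact sets (using the topological representation of $J$ to push closures through countable unions), that $J_{I_J} = J$ (which follows from the identity $\cl(\cl(a) \cap \Q) = \cl(a)$ valid for any $a \subseteq \Q$), and that $I_J$ has the same Borel complexity as $J$, since the map $K \mapsto K \cap \Q$ is Baire class one---each coordinate $q \in K$ is a closed condition on $K$. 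Being analytic, $I_J$ is then $\gdelta$ by the Kechris--Louveau--Woodin theorem, yielding the required representation.

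The dichotomy now follows. If $J$ is both coanalytic and analytic (that is, Borel), Theorem \ref{complex} directly gives $\fsigmadelta$-completeness. Otherwise $J$ is coanalytic and non-Borel; the same construction makes $I_J$ a coanalytic $\sigma$-ideal of compact sets, and the classical Kechris dichotomy for coanalytic $\sigma$-ideals of compact sets says that $I_J$ is either $\gdelta$ or $\coana$-complete. A $\gdelta$ option would force $J \in \fsigmadelta$ by the pullback of the first paragraph, contradicting non-Borelness; so $I_J$ is $\coana$-complete, and the reduction $K \mapsto K \cap \Q$ from $I_J$ to $J$ transfers this completeness to $J$. The main technical obstacles are, first, the verification that $I_J$ is $\sigma$-closed---where a countable union of compacta with $\Q$-traces in $J$ can produce new limit points in the closure that are controlled only via the hypothesis that $J$ is topologically representable---and, second, if $\coana$-completeness is understood in the Wadge sense, the need to refine the Baire class one reduction $K \mapsto K \cap \Q$ to a genuinely continuous one, which can be arranged using the density of $J$ in the style of the proof of Theorem \ref{complex}.
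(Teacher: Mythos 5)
Your ``if'' direction and the reduction of the dichotomy to the $\gdelta$/not-$\gdelta$ dichotomy on the representing family are aligned with the paper, but there is a genuine gap in the proposed family $I_J$: it is not, in general, a $\sigma$-ideal, and this cannot be repaired. Concretely, take $J$ represented by the $\sigma$-ideal of compact countable subsets of $\cantor$, so $J = \{a \subseteq \Q : \cl(a) \text{ countable}\}$; this is coanalytic, dense and countably separated. Choose a compact perfect set $P \subseteq \cantor \setminus \Q$ and rationals $q_n$ accumulating exactly on $P$, so that $K = P \cup \{q_n : n\}$ is compact. Then $P$ and each singleton $\{q_n\}$ lie in $I_J$ (their $\Q$-traces are $\emptyset$ and a point, respectively), while $K \cap \Q = \{q_n : n\}$ has uncountable closure $K$, so $K \notin I_J$. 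Thus $I_J$ is an ideal but not a $\sigma$-ideal, and the step invoking the Kechris--Louveau--Woodin dichotomy for coanalytic $\sigma$-ideals of compact sets simply does not apply; the obstacle you flagged (new limit points appearing in the closure of the union) is not technical but fatal.

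The paper circumvents exactly this issue by not insisting on a $\sigma$-ideal. It sets $F = \{C \in K(X) : C \cap D \text{ dense in } C\}$, which is Borel but not hereditary, and works with $I' = I \cap F$, which is coanalytic as the preimage of $J$ under the Borel map $C \mapsto C \cap D$, yet typically not a $\sigma$-ideal. Since $I'$ is merely coanalytic, the paper applies the Hurewicz separation theorem directly: either $I'$ is $\gdelta$ (then $J = \cl^{-1}(I')$ is $\fsigmadelta$, and Theorem \ref{complex} finishes), or there is a Cantor set $C \subseteq F$ with $C \cap I'$ a copy of $\Q$, and the map $K \mapsto (\bigcup K) \cap D$ on $K(C)$ reduces $K(\Q)$ to $J$ -- here the restriction to $F$ is precisely what guarantees $\bigcup K \notin I$ when $K \not\subseteq \Q$. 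For the ``only if'' direction with $J$ Borel you could instead use the $\sigma$-ideal $I = \{A : \exists a \in J, A \subseteq \cl(a)\}$ constructed in Section \ref{sec:char} (which is genuinely $\sigma$-closed and analytic when $J$ is, hence $\gdelta$ by KLW), but for non-Borel $J$ this $I$ is only $\mathbf{\Sigma}^1_2$, so neither it nor your $I_J$ supplies the coanalytic $\sigma$-ideal your dichotomy argument requires; the Hurewicz route is needed.
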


Besides the descriptive complexity, there is one more
combinatorial condition that determines the structure of
ideals which have topological representations. An ideal $J$
is \textit{weakly selective} if for every $b\notin J$ and a
function $f:b\rightarrow \omega$ there is a $J$-positive
subset $a$ of $b$ such that $f\restriction a$ is either
one-to-one or constant. Equivalently, $J$ is weakly
selective if any partition of a $J$-positive set into sets
in $J$ admits a $J$-positive selector. Weakly selective
ideals have been studied by several authors (see Farah
\cite{farah.semiselective} or Baumgartner and Laver
\cite{baumgartner.laver}) and \cite[Proposition 4.3]{sz}
shows that if $J$ has a topological representation, then it
is weakly selective. Here we prove the following
characterization.

\begin{theorem}\label{ws}
  Let $J$ be a coanalytic ideal. The following are
  equivalent:
  \begin{itemize}
  \item[(i)] $J$ is weakly selective,
  \item[(ii)] $J$ is an intersection of a family of ideals
    with topological representations.
  \end{itemize}
\end{theorem}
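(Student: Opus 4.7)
The direction $(\text{ii})\Rightarrow(\text{i})$ is straightforward, parallel to Corollary \ref{rudin-blass}: by \cite[Proposition 4.3]{sz}, every ideal with a topological representation is weakly selective, and weak selectivity passes to arbitrary intersections. Given $b\notin J=\bigcap_{\alpha}J_{\alpha}$ and $f:b\to\omega$, pick $\alpha_{0}$ with $b\notin J_{\alpha_{0}}$ and apply weak selectivity of $J_{\alpha_{0}}$; the resulting $J_{\alpha_{0}}$-positive subset of $b$ on which $f$ is one-to-one or constant is $J$-positive since $J\subseteq J_{\alpha_{0}}$.

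For $(\text{i})\Rightarrow(\text{ii})$, the plan is to construct, for each $J$-positive $b$, an ideal $K_{b}$ with a topological representation satisfying $J\subseteq K_{b}$ and $b\notin K_{b}$; then $J=\bigcap_{b\notin J}K_{b}$. I would realize $K_{b}$ directly as $J_{I}$ for $X=\cantor$, a countable dense $D\subseteq\cantor$, and $I$ the meager ideal, so that the topological representation is built into the definition and moreover has the form guaranteed by Theorem \ref{meager}. The construction reduces to producing a bijection $\phi:\omega\to D$ such that (a) $\cl(\phi(a))$ is nowhere dense in $\cantor$ for every $a\in J$, and (b) $\cl(\phi(b))$ contains a nonempty open set. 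Condition (b) is arranged by fixing a clopen $U\subseteq\cantor$ and choosing $\phi$ so that $\phi(b)$ is dense in $U$ and $\phi(\omega\setminus b)$ is dense in $\cantor\setminus U$. Then $K_{b}:=J_{I}$ satisfies $J\subseteq K_{b}$ by (a) and $b\notin K_{b}$ by (b).

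The main obstacle is ensuring (a) for all $a\in J$ simultaneously. For a single $a$, the property ``$\cl(\phi(a))$ is nowhere dense'' is a comeager condition on the bijection $\phi$, but $J$ may be uncountable, so one cannot simply invoke Baire category. I would handle this via a transfinite fusion along a $\coana$-norm $\psi:J\to\omega_{1}$: at countable stage $\xi$, use weak selectivity of $J$ to partition each $J$-element of rank $\le\xi$ that is about to become dense in a basic clopen into pieces lying in $J$ and then redistribute the values of $\phi$ on a positive transversal into the complement of that clopen. Weak selectivity provides exactly the combinatorial move required---producing a positive subset on which an assignment can be consistently reshuffled---and coanalyticity provides the stratification that makes the construction close after countably many effective adjustments. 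Once $\phi$ is in hand, $K_{b}=J_{I}$ automatically has a topological representation and satisfies the required properties, completing the decomposition $J=\bigcap_{b\notin J}K_{b}$.
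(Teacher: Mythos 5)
Your direction (ii)$\Rightarrow$(i) is correct and even slightly cleaner than the paper's: since $J\subseteq J_{\alpha_0}$, a $J_{\alpha_0}$-positive set is automatically $J$-positive, so weak selectivity of $J_{\alpha_0}$ (which follows from \cite[Proposition 4.3]{sz}) transfers directly. The paper instead re-derives the selector from the topological representation of $J_{\alpha_0}$ by hand, but the two arguments amount to the same thing.

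The direction (i)$\Rightarrow$(ii) has a genuine gap. Your overall shape (for each $b\notin J$, build an ideal $K_b\supseteq J$ with $b\notin K_b$ that has a topological representation, and take the intersection) is exactly the paper's strategy, and your choice to realize $K_b$ via the meager ideal on the Cantor space is reasonable in light of Theorem \ref{meager}. But the core of the matter is how to get $\phi$ with $\cl(\phi(a))$ nowhere dense for \emph{every} $a\in J$, and your proposal for this step --- ``transfinite fusion along a $\coana$-norm,'' with adjustments made at each countable stage $\xi<\omega_1$ --- does not describe a construction. A bijection $\phi:\omega\to D$ is an object built in $\omega$ many moves; a transfinite sequence of length $\omega_1$ of ``redistributions'' has no pointwise limit unless all but countably many adjustments are vacuous, and you give no reason why that should hold. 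Moreover, for a $\coana$-norm the slices $\{a\in J:\psi(a)\le\xi\}$ are Borel and typically uncountable, so ``partitioning each $J$-element of rank $\le\xi$'' is not a step you can carry out in a single move, and ``redistribute the values of $\phi$ on a positive transversal'' leaves it entirely open what transversal, of what partition, and how this is compatible with the redistributions already made. The observation that ``$\cl(\phi(a))$ nowhere dense'' is a comeager condition on $\phi$ for each fixed $a$ is correct but unhelpful, precisely because $J$ is uncountable, as you note.

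What the paper actually does at this point is essential and missing from your sketch. It extracts, from the coanalyticity of $J$, a \emph{closed} unfolded game $H'(J)$; it shows via Lemma \ref{grigorieff} (a tree characterization of weak selectivity) that Player II cannot win the folded game $H(J)$ and hence cannot win $H'(J)$; by closed-game determinacy Player I wins $H'(J)$; and from Player I's strategy it reads off a \emph{countable} family $Y_b$ of $J$-positive subsets of $b$ such that every $a\in J$ is disjoint from some member of $Y_b$ (Claim \ref{family}). That countable family is then massaged into a family $X_b$ that is ``almost separating'' and ``almost closed under finite intersections,'' which in turn defines an ideal $J_b$ that is countably separated and dense. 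This is exactly the countable combinatorial object your construction of $\phi$ would need as input, and you have no substitute for it. Coanalyticity is not used as a stratification tool here but to make the game closed and hence determined; without some mechanism like this for producing a countable separating family, the direction (i)$\Rightarrow$(ii) does not go through.
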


\bigskip

The paper is organized as follows. A preliminary discussion
and a proof of Theorem \ref{char} are given in Section
\ref{sec:char}. Theorem \ref{meager} is proved in Section
\ref{sec:meager}. Theorem \ref{complex} together with
Corollary \ref{klw} are proved in Section
\ref{sec:complex}. Section \ref{sec:ws} contains a proof of
Theorem \ref{ws}.

\section{A characterization of ideals with topological representations}\label{sec:char}

The definition on the ideals $J_I$ formally depends on the
choice of the dense set $D$. It turns out, however, that no
matter what dense set $D$ is chosen, the ideal $J_I$ is the
same, up to isomorphism.

\begin{proposition}\label{independence}
  Given a $\sigma$-ideal $I$ on a separable metric space $X$
  and two dense countable sets $D$ and $E$ in $X$, if
  $J=\{a\subseteq D: \cl(a)\in I\}$ and $K=\{a\subseteq E:
  \cl(a)\in I\}$, then $J$ and $K$ are isomorphic.
\end{proposition}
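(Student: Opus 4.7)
The plan is to construct an explicit bijection $\phi\colon D\to E$ using a Hilbert-hotel-style rearrangement and to verify that $\phi$ displaces only a negligible set of points in a compatible metric, from which the ideal isomorphism follows.

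It suffices to handle the case $D\subseteq E$, since in general one can pass through the dense countable set $F=D\cup E$, applying the special case both to $D\subseteq F$ and to $E\subseteq F$. So assume $D\subseteq E$ and enumerate $E\setminus D=\{f_i:i<N\}$ for some $N\leq\omega$. Fix a compatible metric $\rho$ on $X$. Every isolated point of $X$ lies in every dense subset of $X$, so no $f_i$ is isolated; hence for each $i<N$ I can inductively pick a sequence of pairwise distinct points $d_{i,0},d_{i,1},\ldots\in D$ with $\rho(d_{i,j},f_i)<2^{-i-j}$, ensuring also that these sequences are disjoint across different $i$. Define $\phi\colon D\to E$ by $\phi(d_{i,0})=f_i$, $\phi(d_{i,j})=d_{i,j-1}$ for $j\geq 1$, and $\phi(d)=d$ for $d\in D$ not of the form $d_{i,j}$; this is evidently a bijection of $D$ onto $E$.

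The displacement $\rho(d,\phi(d))$ vanishes outside the sequences and, by the triangle inequality through $f_i$, is at most $3\cdot 2^{-i-j}$ at $d_{i,j}$. In particular, for every $\varepsilon>0$ the set $\{d\in D:\rho(d,\phi(d))\geq\varepsilon\}$ is finite. Consequently, for any infinite $a\subseteq D$, a sequence of distinct elements $x_k\in a$ converging to $x\in X$ forces $\phi(x_k)\to x$ as well, and symmetrically via $\phi^{-1}$; so $a$ and $\phi(a)$ share the same accumulation set $L$ in $X$. Since $a$ and $\phi(a)$ are countable subsets of $X$ and $I$ contains all singletons, $\cl(a)=a\cup L$ and $\cl(\phi(a))=\phi(a)\cup L$ both lie in $I$ precisely when $L$ does; this gives $a\in J\iff\phi(a)\in K$.

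The main point requiring attention is the diagonal choice of disjoint sequences $d_{i,j}$ meeting the $2^{-i-j}$ bound, but since the $2^{-i-j}$-ball around the non-isolated point $f_i$ contains infinitely many points of $D$, this is routine countable bookkeeping. The rest is the standard metric-approximation observation that a bijection with eventually vanishing displacement preserves accumulation points in $X$, together with the fact that any countable subset of $X$ already lies in $I$.
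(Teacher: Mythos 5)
Your argument is correct. Both you and the paper prove the proposition by constructing a bijection $\phi\colon D\to E$ whose displacement $\rho(d,\phi(d))$ tends to zero (in the sense that only finitely many points are displaced by $\geq\varepsilon$, for every $\varepsilon>0$), and then observing that such a bijection preserves accumulation points, so that for $a\subseteq D$ the sets $\cl(a)$ and $\cl(\phi(a))$ differ only by a countable set and hence lie in the $\sigma$-ideal $I$ simultaneously. The difference is in how the bijection is built: the paper runs a direct back-and-forth, enumerating $D=\{d_n\}$ and $E=\{e_n\}$ simultaneously with $\rho(d_n,e_n)<1/n$; you first reduce to the nested case $D\subseteq E$ by passing through $F=D\cup E$, and then handle the nested case by a Hilbert-hotel shift along sequences $d_{i,j}\to f_i$ chosen inside $D$. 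Your reduction is a clean organizational simplification: it avoids the mild bookkeeping the back-and-forth needs (e.g.\ handling the isolated points, which lie in both $D$ and $E$), at the modest cost of an extra composition. Both routes rest on the same key observation and are of comparable length, so this is a legitimate but not dramatically different proof; your added care in verifying that the disjoint sequences $d_{i,j}$ can be chosen (because each $f_i$ is non-isolated and only finitely many earlier points cluster near $f_i$) is appropriate and correct.
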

\begin{proof}
  Using a back-and-forth argument, enumerate
  $D=\{d_n:n<\omega\}$ and $E=\{e_n:n<\omega\}$ so that the
  distance of $d_n$ and $e_n$ is smaller than $1\slash
  n$. Now for $a\subseteq\omega$ write
  $\cl_D(a)=\cl(\{d_n:n\in a\})$ and
  $\cl_E(a)=\cl(\{e_n:n\in a\})$, where $\cl$ stands for the
  closure taken in $X$. To see that $J$ and $K$ are
  isomorphic, it is enough to show that $\cl_D(a)$ belongs
  to $I$ if and only if $\cl_E(a)$ belongs to $I$. Note that
  \begin{enumerate}
  \item $\cl_D(a)\subseteq\cl_E(a)\cup\{d_n:n\in a\}$,
  \item $\cl_E(a)\subseteq\cl_D(a)\cup\{e_n:n\in a\}$.
  \end{enumerate}
  Here, (1) follows from the fact that if $x$ belongs to
  $\cl_D(a)$ and is not one of the $d_n$'s (for $n\in a$),
  then there is an infinite subsequence of $d_n$'s (indexed
  with elements of $a$) that converges to $x$. Since $e_n$
  is $(1\slash n)$-close to $d_n$, there is also an infinite
  subsequence of $e_n$'s (with the same index set)
  converging to $x$. (2) follows by symmetry. Now, (1) and
  (2) imply that $\cl_D(a)$ and $\cl_E(a)$ can differ by an
  at most countable set. Since the singletons belong to $I$
  (we always assume that ideals contain all singletons), it
  follows that $\cl_D(a)\in I$ if and only if $\cl_E(a)\in
  I$.
\end{proof}

Now we will prove Theorem \ref{char}. Let us first comment
on the sharpness of condition (i) in that theorem: neither
being dense nor countably separated alone implies that the
ideal has a topological representation.

To see that, first consider the ideal
$\emptyset\times\Fin=\{a\subseteq\omega\times\omega:\forall
n\in\omega\ a_n\in\Fin\}$, where $a_n=\{m\in\omega:(n,m)\in
a\}$. This ideal is countably separated, by the sets
$c_{n,k}=\{(n,m)\in\omega\times\omega:m>k\}$ but it is
clearly not dense.

On the other hand, consider the ideal
$\Fin\times\Fin=\{a\subseteq\omega\times\omega:
\{n\in\omega:a_n\in\Fin\}\in\Fin\}$. The ideal
$\Fin\times\Fin$ is not weakly selective, as witnessed by
the projection function $(n,m)\mapsto n$. By
\cite[Proposition 4.3]{sz}, an ideal which has a topological
representation is weakly selective. Thus, $\Fin\times\Fin$
does not have a topological representation. On the other
hand, it is clearly dense. Thefore, by Theorem \ref{char} it
cannot be countably separated.

Below, for $\varepsilon>0$ and $A\subseteq\cantor$, write
$$\ball(\varepsilon,A)=\{x\in\cantor: \exists y\in A\quad
d(x,y)<\varepsilon\}.$$

\begin{proof}[Proof of Theorem \ref{char}]
  (i)$\Rightarrow$(ii) Suppose $J$ is represented on $X$ and
  let $D\subseteq X$ be countable dense, $I$ be a
  $\sigma$-ideal such that $J=J_I$. First note that $J_I$ is
  dense. Indeed, take an infinite $a\notin J_I$. Then
  $\cl(a)\notin I$, so in particular $\cl(a)$ is
  uncountable. Let $x\in\cl(a)\setminus a$ and pick a
  sequence $\langle x_n: n\in\omega\rangle $ of elements of
  $a$ converging to $x$. Then $b=\{x_n:n<\omega\}$ is an
  infinite subset of $a$, which is in $J_I$ since
  $\cl(b)=b\cup\{x\}$ is countable and hence in $I$.

  To see that $J_I$ is countably separated, fix a countable
  basis $\{U_n:n<\omega\}$ of $X$ and let $c_n=U_n\cap
  D$. We claim that $\{c_n:n<\omega\}$ witnesses that $J$ is
  countably separated. Indeed, let $a,b\subseteq D$ be such
  that $a\notin J_I,b\in J_I$. Then $\cl(a)\notin I$ and
  $\cl(b)\in I$. By countable additivity of $I$, there
  exists $n$ such that $U_n\cap\cl(b)=\emptyset$ and
  $U_n\cap\cl(a)\notin I$. Then clearly $c_n\cap
  b=\emptyset$ and $c_n\cap a$ is $J$-positive since
  $\cl(c_n\cap a)$ contains $U_n\cap\cl(a)$.

  (ii)$\Rightarrow$(i) Suppose now that $J$ is countably
  separated and dense. Assume that $J$ is an ideal on
  $\omega$. We will first show that the family
  $\{c_n:n\in\omega\}$ witnessing that $J$ is countably
  separated can be improved a little. We say that a family
  of subsets of $\omega$ \textit{separates points} if for
  each $n\not=m\in\omega$ there is a set $a$ in that family
  such that $n\in a$ and $m\notin a$.

\begin{lemma}\label{improved}
  If $J$ is countably separated, then there is a family
  witnessing that $J$ is countably separated, which
  separates points and is such that all Boolean combinations
  of its elements are either infinite or empty.
\end{lemma}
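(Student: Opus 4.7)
I would construct $\{d_n : n<\omega\}$ by a back-and-forth construction in countably many stages, maintaining the invariant that the finite Boolean subalgebra generated by the sets chosen so far partitions $\omega$ into atoms that are all either infinite or empty. At stage $0$ the trivial subalgebra has unique atom $\omega$, so the invariant holds. I enumerate in advance two countable families of tasks: point-separation tasks, one for each pair $(i,j)$ of distinct elements of $\omega$, and countable-separation tasks, one for each $c_n$ in the given family.

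At a point-separation stage for $(i,j)$, if the current atoms already separate $i$ and $j$, nothing is needed. Otherwise $i$ and $j$ share a common infinite atom $A$, and I add a set $d$ which, restricted to every current atom $A'$, is empty, all of $A'$, or an infinite-coinfinite subset of $A'$; on $A$ I choose a split containing $i$ and excluding $j$. Such a choice exists because $A$ is infinite, so the new atoms are again all infinite or empty.

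At a countable-separation stage corresponding to $c_n$, if $c_n$ meets every current atom in a way that is empty, full, or infinite-coinfinite, I simply add $c_n$. Otherwise I add a modification $d_n$ which agrees with $c_n$ on the good atoms and, on each bad atom $A$ (where $c_n\cap A$ or $A\setminus c_n$ is finite and nonempty), replaces $c_n\cap A$ by either $\emptyset$ or $A$, thereby eliminating the offending finite piece.

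The hard part will be verifying that the modified $d_n$ still witnesses the same countable-separation pairs as $c_n$, that is, $d_n\cap a=\emptyset$ and $d_n\cap b\notin J$ whenever these held for $c_n$. The two possible replacements on a bad atom have opposite risks: replacing $c_n\cap A$ by $\emptyset$ keeps $d_n\cap a=\emptyset$ but may lose $d_n\cap b\notin J$ if enough of $c_n\cap b$ lives in bad atoms, whereas replacing by $A$ preserves the $b$-condition but risks breaking $d_n\cap a=\emptyset$. The resolution would be to orchestrate the replacements so that $d_n\triangle c_n$ always lies in $J$, so the modification is invisible to the $J$-separation test; this calls for careful bookkeeping of which atoms are large in the $J$-sense, and in the context where the lemma is applied, with $J$ dense as in the proof of Theorem \ref{char}, one can arrange the staged partition so that every bad atom itself belongs to $J$.
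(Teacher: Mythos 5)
The high-level architecture of your proposal matches the paper's: a stage-by-stage construction alternating point-separation tasks with tasks that process the given sets $c_n$, maintaining the invariant that all atoms of the finite algebra generated so far are infinite or empty. The point-separation stages are handled correctly. However, the mechanism you propose for the $c_n$-stages has a genuine flaw.

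You propose, on a bad atom $A$ where $A\setminus c_n$ is finite and nonempty, to replace $c_n\cap A$ by all of $A$. This \emph{adds} points to $c_n$. But the separating condition $a\cap d_n=\emptyset$ is an exact-emptiness condition, not a condition up to $J$. Since $J$ contains all singletons, even a single added point $p\in A\setminus c_n$ can be the whole of some $a\in J$: taking $a=\{p\}$ and any $b$ that $c_n$ separates from $a$, the set $d_n$ then has $a\cap d_n\neq\emptyset$, and there is no reason any other member of the new family should work for this pair. Your proposed repair, namely orchestrating so that $d_n\triangle c_n\in J$, does not resolve this: that condition makes the modification invisible only on the $b$-side of the test (where the requirement $b\cap d_n\notin J$ is indeed $J$-invariant), not on the $a$-side. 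The closing suggestion that one can arrange all bad atoms to lie in $J$ suffers the same problem, since a set $a\in J$ may still meet $A\setminus c_n$ even when $A\in J$.

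The correct move on a cofinite-but-$J$-positive piece $c_n\cap A$ is the opposite: do not enlarge it to $A$, but \emph{shrink} it by removing an infinite subset $e'\subseteq c_n\cap A$ with $e'\in J$, which exists because $J$ is dense. Shrinking keeps $d_n\subseteq c_n$, so $a\cap d_n=\emptyset$ is automatic; removing only a $J$-set preserves $b\cap d_n\notin J$; and the removed infinite set joins $A\setminus d_n$, so both $d_n\cap A$ and $A\setminus d_n$ are infinite, preserving your invariant. This is precisely what the paper does (Case 1, third subcase). In short, you invoked density at the end but applied it to make atoms $J$-small rather than to make cofinite traces of $c_n$ coinfinite; the former does not help, the latter is what is needed. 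Once this single step is corrected to a shrinking-by-a-dense-$J$-set step, your construction becomes essentially the paper's argument.
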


\begin{proof}
  Let $\{c_n:n\in\omega\} $ be a family witnessing that $J$
  is countably separated. Enumerate all pairs of distinct
  natural numbers as
  $\langle(k_n,l_n):n\in\omega\rangle$. We will construct a
  family $\{d_n:n\in\omega\}$ of subsets of $\omega$ such
  that for each $n$ the following is true
  \begin{itemize}
  \item[(a)] if $n=2m$ is even, then $d_n$ is a subset of $c_m$
    such that $c_m\setminus d_n\in J$;
  \item[(b)] if $n=2m+1$ is odd, then $k_m\in d_n$ and $l_n\notin
    d_n$;
  \item[(c)] all Boolean combinations of $d_i$ for $i\leq n$ are
    infinite or empty.
  \end{itemize}

  Notice that such a family will also witness that $J$ is
  countably separated by (a). It will separate points by (b)
  and have all Boolean combinations either empty of infinite
  by (c). Hence, $\{d_n:n<\omega\}$ will be the required
  family.

  To construct the sets $d_n$ inductively, we start with
  $d_0=c_0$. Suppose that $d_k$ for $k<n$ have been
  constructed. All Boolean combination of $\{d_k: k<n\}$
  define a finite partition $\{a_k:k<k_n\}$ of $\omega$ into
  infinite subsets.

  \textbf{Case 1}.  Suppose that $n=2m$ is even. For each
  $k<k_n$ we define a set $e_k\subset a_k\cap c_m$ in the
  following way. There are three possibilities:
  \begin{itemize}
  \item if $a_k\cap c_m\in J$, then $e_k=\emptyset$;
  \item if $a_k\cap c_m\notin J$ and $a_k\setminus c_m$ is
    infinite, then $e_k=a_k\cap c_m$;
  \item if $a_k\cap c_m\notin J$ and $a_k\setminus c_m$ is
    finite, then find an infinite subset $e_k'\in J$ of
    $a_k\cap c_m$ (using the fact that $J$ is dense) and
    define $e_k=\left(a_k\cap c_m\right)\setminus e_k'$.
 \end{itemize}
 The set $d_n=\bigcup_{k<k_n}e_k$ is a subset of $c_m$ such
 that $c_m\setminus d_n\in J$. Also, $d_n$ is either empty
 or both infinite and coinfinite in every $a_k$, therefore
 it is as needed.

 \textbf{Case 2}. Suppose that $n=2m+1$ is odd. There is
 $k<k_n$ such that $k_m\in a_k$. Let $d_n$ be any infinite
 subset of $a_k$ such that $k_m\in d_n$, $l_m\notin d_n$ and
 $a_k\setminus d_n$ is infinite. Then $d_n$ separates the
 pair $(k_m,l_m)$ and in each $a_k$ it is either empty or
 infinite and coinfinite, therefore it is as needed.
\end{proof}

We can now assume that a family $\{c_n:n<\omega\}$
witnessing that $J$ is countably separated is as in Lemma
\ref{improved}. Define a topology $\tau$ on $\omega$ by
letting all $c_n$'s be clopen basic sets. This is a
Hausdorff, second-countable and regular topology, since it
is zero-dimensional. By Urysohn's metrization theorem
\cite[Theorem 1.1]{kechris}, it is metrizable. Note that
since all Boolean combinations of the elements of the basis
are either empty or infinite, this space has no isolated
points. Then, as a countable metrizable topological space
without isolated points, is homeomorphic to the rationals,
by a theorem of Sierpi\'nski \cite[Exercise
7.12]{kechris}. Embed $\left(\omega,\tau\right)$ into the
Cantor set $2^\omega$ so that it is homeomorphic to
$D=\mathbb{Q}\cap2^\omega$. Thus, via this embedding, we
treat now $J$ as an ideal on $D$.

Define an ideal of $K(\cantor)$ by $I=\{A\in
K(\cantor):\exists a\in J\quad A\subset\cl(a)\}$. It turns
out that $I$ is a $\sigma$-ideal on $K(\cantor)$.

\begin{lemma}
  $I$ is a $\sigma$-ideal of compact sets.
\end{lemma}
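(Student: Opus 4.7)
The plan is to verify the three defining properties of a $\sigma$-ideal on $K(\cantor)$: hereditariness, closure under finite unions, and closure under countable unions whose result is compact. Hereditariness is immediate, and closure under finite unions follows from $\cl(a_1\cup a_2)=\cl(a_1)\cup\cl(a_2)$ together with $J$ being an ideal.

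For countable unions, given $A_n\in I$ with witnesses $a_n\in J$ such that $A_n\subseteq\cl(a_n)$ and $A=\bigcup_n A_n\in K(\cantor)$, I would exploit compactness of $A$ to construct a single $a\in J$ with $A\subseteq\cl(a)$. At each scale $2^{-k}$, cover $A$ by finitely many basic cylinders of diameter below $2^{-k}$; in each such cylinder $V$ meeting $A$, pick $d_{V,k}\in V\cap a_{n(V,k)}$, which exists because any $x\in V\cap A$ lies in some $A_n\subseteq\cl(a_n)$. Setting $a=\{d_{V,k}:V,k\}\subseteq\bigcup_n a_n$, the diameter control yields $A\subseteq\cl(a)$.

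The main obstacle is verifying $a\in J$, since $\bigcup_n a_n$ is only a countable union of $J$-sets and need not itself lie in $J$. I would argue by contradiction using the clopen basis $\{c_m:m<\omega\}$ from Lemma \ref{improved}, which is a clopen basis of $D$ (chosen to consist of cylinders) witnessing countable separation of $J$. Assuming $a\notin J$, at each stage $k$ apply separation to the finite $J$-set $a_0\cup\cdots\cup a_k\cup\{d_{V,k'}:k'\leq k\}$ against $a\in J^+$ and refine within the basis to obtain a cylinder $c_{m_k}=U_{m_k}\cap D$ of diameter below $2^{-k}$ with $c_{m_k}\cap a\in J^+$ and still disjoint from the finite set; such refinement is possible because $c\cap a\in J^+$ decomposes under any finite clopen partition of $c$ into at least one $J$-positive piece. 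Pick $d_k\in c_{m_k}\cap a$; the avoidance condition forces $d_k=d_{V,k'}$ with $k'>k$, so $d(d_k,A)<2^{-k}$, and a subsequence $d_{k_j}\to x\in A$ by compactness of $\cantor$. For large $j$ one has $d(d_{k_j},x)<2^{-k_j}$, so $x$ agrees with $d_{k_j}$ on the first $k_j$ coordinates and hence lies in the cylinder $U_{m_{k_j}}$; since $U_{m_{k_j}}\cap a_n=\emptyset$ for $k_j\geq n$, $x$ has a clopen neighborhood disjoint from $a_n$, forcing $x\notin\cl(a_n)$ for every $n$. This contradicts $x\in A\subseteq\bigcup_n\cl(a_n)$ and yields $a\in J$. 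The delicate point is the simultaneous diameter refinement of the $c_{m_k}$'s and the separation step, which rests on the basis being composed of cylinders.
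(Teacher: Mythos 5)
Your construction of the candidate witness $a$ and the reduction of the problem to showing $a\in J$ are sound, and the overall strategy is genuinely different from the paper's: the paper first proves a structural compactness claim (there is $n$ with $A\setminus\ball(\varepsilon,A_n)\in I$ for all $\varepsilon>0$) and then decomposes $A$ into annular shells around $A_0$, choosing witnesses $b_k$ that accumulate on $A_0$; you instead build a single thin set $a\subseteq\bigcup_n a_n$ by a direct $2^{-k}$-net covering of $A$. That difference is fine in principle.

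There is, however, a real gap in the verification that $a\in J$. After extracting a subsequence with $d_{k_j}\to x\in A$, you assert ``for large $j$ one has $d(d_{k_j},x)<2^{-k_j}$.'' This does not follow: $d(d_{k_j},x)\to 0$ and $2^{-k_j}\to 0$ are two sequences going to zero with no controlled relation, so the inequality can fail for every $j$ (think of $d(d_{k_j},x)=2^{-j}$ while $k_j=2^j$). What you actually know is only $d(d_{k_j},A)<2^{-k_j}$, which compares $d_{k_j}$ to \emph{some} point of $A$, not to the limit $x$. Worse, even if you had $d(d_{k_j},x)<2^{-k_j}$, this only gives agreement on the first $k_j$ coordinates, whereas membership in a cylinder of diameter $<2^{-k_j}$ requires agreement on strictly more than $k_j$ coordinates, so $x\in U_{m_{k_j}}$ still would not follow. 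Since $x\in U_{m_{k_j}}$ for some large $j$ is exactly what produces the clopen neighborhood of $x$ missing $a_n$, the contradiction is not reached.

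The fix is to nest the $c_{m_k}$'s: at stage $k+1$, apply separation to $e_{k+1}$ against $c_{m_k}\cap a\notin J$ to get $c$ with $c\cap e_{k+1}=\emptyset$ and $c\cap c_{m_k}\cap a\notin J$; take $c_{m_{k+1}}$ to be a cylinder refinement of $c\cap c_{m_k}$ of diameter $<2^{-(k+1)}$ with $c_{m_{k+1}}\cap a\notin J$. Then the corresponding clopen sets $C_k\subseteq\cantor$ are decreasing with diameters tending to $0$, so $\bigcap_k C_k=\{x\}$ for a single $x$; since $d_k\in C_k$ one has $d(d_k,x)\leq\mathrm{diam}(C_k)<2^{-k}$, and combined with $d(d_k,A)<2^{-k}$ this gives $x\in A$. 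Now $x\in C_k$ for every $k$, and $C_k\cap a_n=\emptyset$ for $k\geq n$, so $x\notin\cl(a_n)$ for all $n$, contradicting $x\in A\subseteq\bigcup_n\cl(a_n)$. With this nesting your argument goes through; without it, the convergence step is unjustified.
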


\begin{proof}
  Suppose it is not a $\sigma$-ideal. In that case there is
  a sequence of compact sets $A_n\in I$ such that their
  union $A=\bigcup_{n<\omega}A_n$ is also compact and does
  not belong to $I$. Without loss of generality we can
  assume that $A_n$'s are increasing. Fix a metric $d$ on
  $\cantor$ of diameter $\leq 1$. The metric notions below
  refer to the metric $d$.

  We claim that there is $n\in\omega$ such that $A\setminus
  \ball(\varepsilon,A_n)\in I$ for every
  $\varepsilon>0$. Suppose otherwise and construct an
  increasing sequence of natural numbers $n_i$ and a
  sequence of reals $\varepsilon_i>0$ such that:
  \begin{itemize}
  \item $A\setminus \ball(\varepsilon_i,A_{n_i})$ does not
    belong to $I$,
  \item $A_{n_{i+1}}$ is not contained in
    $\ball(\varepsilon_i,A_{n_i})$.
  \end{itemize}
  This is easy to do using our assumption and the fact that
  $A_n$'s exhaust $A$. But then
  $A\setminus\bigcup_{i<\omega}
  \ball(\varepsilon_i,A_{n_i})$ is nonempty, by compactness
  of $A$. On the other hand, if $x\in
  A\setminus\bigcup_{i<\omega}
  \ball(\varepsilon_i,A_{n_i})$, then $x\in
  A\setminus\bigcup_{n<\omega}A_n$, which gives a
  contradiction.

  Fix a number $n$ as in the previous paragraph and without
  loss of generality assume that $n=0$. Let
  $B_0=A\setminus\ball(1,A_0)$ and for each $k\geq 1$ let
  $B_k=A\cap(\ball(\frac{1}{k},A_0)\setminus
  \ball(\frac{1}{k+1},A_0))$. Note that each $B_k$ is in
  $I$, by our assumption. Next, for each $k\in\omega$ find a
  set $b_k\subseteq D$ such that $b_k\in J$,
  $B_k\subseteq\cl(b_k)$ and
  $b_k\subseteq\ball(\frac{1}{k},B_k)$. Find also $a\in J$
  such that $A_0\subseteq\cl(a)$. Let
  $b=\bigcup_{k<\omega}b_k\cup a$. Note that
  $A\subseteq\cl(b)$, so $b\notin J$. Since $J$ is countably
  separated by $c_n$'s, there is $n$ such that $a\cap
  c_n=\emptyset$ and $b\cap c_n\notin J$. Now, since $c_n$'s
  are clopen on $D$, we get a clopen set $C\subseteq\cantor$
  such that $a\subseteq C$ and $C\cap c_n=\emptyset$. Let
  $\varepsilon>0$ be such that
  $\ball(\varepsilon,A_0)\subseteq C$. By the definition of
  $b_k$'s, all but finitely many of them are contained in
  $\ball(\varepsilon,A_0)$. Hence $b\setminus C$ is covered
  with finitely many of the sets $b_k$, and so is $b\cap
  c_n\subseteq b\setminus C$. Since each $b_k$ belongs to
  $J$, this contradicts the fact that $b\cap c_n\notin J$.
\end{proof}

Now, to finish the proof we will show that $J=J_I$. One
inclusion is obvious: if $a\in J$, then $\cl(a)\in I$ by the
definition of $I$ and so $a\in J_I$. On the other hand, if
$a\in J_I$, then $\cl(a)\in I$. Thus, there is $b\in J$ with
$\cl(a)\subseteq\cl(b)$. We must prove that $a\in
J$. However, if $a\notin J$, then for some $n$ we have
$c_n\cap b=\emptyset$ and $c_n\cap a\not=\emptyset$. Let
$C\subseteq\cantor$ be a basic clopen set with $C\cap D=c_n$
and note that $\cl(b)\cap C=\emptyset$ and $\cl(a)\setminus
C\not=\emptyset$, which contradicts
$\cl(a)\subseteq\cl(b)$. Thus, it must be the case that
$a\in J$, which concludes the argument that $J=J_I$ and ends
the entire proof.

\end{proof}

\section{Representation via compact nowhere dense sets}
\label{sec:meager}

It is fairly easy to see that if $J$ has a topological
representation, then it is also represented on a compact
metric space. Indeed, if $J$ is represented on $X$ via $I$,
then let $\hat X$ be a metric compactification of $X$ and
let $\hat I$ be the $\sigma$-ideal on $\hat X$ generated by
the sets $\cl(K)$ for $K\in I$ (the closure is taken in
$\hat X$) and the singletons $\{x\}$ for $x\in\hat
X\setminus X$. Then $J_I$ is represented on $\hat X$ via
$\hat I$ as witnessed by the same dense set $D\subseteq\hat
X$. The proof of Theorem \ref{char} shows something more: if
$J$ has a topological representation, then we can actually
find a $\sigma$-ideal of $I$ closed subsets of the Cantor
space $\cantor$ such that $J$ is represented by $I$. In the
proof of Theorem \ref{meager} we will use similar arguments
and we will make sure that all closed sets in $I$ are
nowhere dense.

\begin{proof}[Proof of Theorem \ref{meager}]
  Suppose $J$ is represented on $X$ via a $\sigma$-ideal
  $I$. By the remarks above, we can assume $X$ is the Cantor
  space and $J$ is a family of subsets of
  $D=\Q\subseteq\cantor$. Note that in this case a family
  witnessing that $J$ is countably separated can be chosen
  to consist of those basic clopen subsets of $\cantor$
  which are $J$-positive. Enumerate the $J$-positive basic
  clopen subsets of $D$ as $\{c_n:n<\omega\}$. Below, the
  notions of open and clopen will refer to the topology on
  $D$.

  \begin{lemma}\label{separation}
    For each open $J$-positive $a\subseteq D$ and distinct
    $k,l\in a$ there are disjoint $J$-positive clopen subsets
    $b,c\subseteq a$ such that $k\in b$ and $l\in c$.
  \end{lemma}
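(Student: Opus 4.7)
The plan is to first find, by zero-dimensionality of $\cantor$ and openness of $a$, disjoint basic clopens $B_k,B_l\subseteq\cantor$ with $k\in B_k$, $l\in B_l$, and $B_k\cap D,B_l\cap D\subseteq a$. In the easy case that both $B_k\cap D$ and $B_l\cap D$ are $J$-positive, we simply set $b=B_k\cap D$ and $c=B_l\cap D$; these are basic clopens in $D$, disjoint, $J$-positive, contained in $a$, and contain $k$ and $l$ respectively.

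The harder case is when at least one of these basic clopens, say $B_k\cap D$, lies in $J$. Note that $a\setminus B_k$ is then $J$-positive, since if it were in $J$ we would have $\cl(a)\subseteq B_k\cup\cl(a\setminus B_k)$, a union of two $I$-sets, contradicting $\cl(a)\notin I$. The idea is to augment the $k$-side by adjoining $J$-positive clopen material drawn from inside $a$ but disjoint from $B_l$. Iteratively applying the countable separation, with $a_0\in J$ encoding the part we want to avoid (initially $B_k\cap D$, or $(B_k\cup B_l)\cap D$ if both are $J$-null) and $b_0$ a $J$-positive open piece of $a$ not yet used, we extract pairwise disjoint $J$-positive basic clopens $c_{n_1},c_{n_2},\ldots$ inside $a$, all disjoint from $B_k\cup B_l$. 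We further arrange, using the countability of $D$, that the diameters of these basic clopens shrink quickly enough so that their accumulation points in $\cantor$ lie outside $D$; since at each stage we may replace a candidate basic clopen by any $J$-positive basic clopen contained in a given open $J$-positive set, there is enough flexibility to steer accumulation points off the countable set $D$.

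Splitting the family $\{c_{n_i}\}$ into two infinite disjoint subfamilies and adjoining one to each side that needs augmenting, we obtain $b=(B_k\cap D)\cup\bigcup_i c_{n_{2i+1}}$ and (analogously, if required) $c=(B_l\cap D)\cup\bigcup_i c_{n_{2i}}$. Each is $J$-positive, being a superset of a $J$-positive basic clopen; they are disjoint by construction; they are contained in $a$; and they contain $k$ and $l$ respectively. Finally, each is clopen in $D$ because the boundary in $\cantor$ of its corresponding open set is, by construction, disjoint from $D$, being contained in the set of accumulation points of the chosen basic clopens. The main obstacle is this joint control: every augmenting basic clopen must simultaneously be $J$-positive, lie in $a$, be pairwise disjoint and disjoint from $B_k\cup B_l$, and the whole family must accumulate outside $D$. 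All of these are secured by iterating the countable separation together with a diagonal refinement exploiting the countability of $D$.
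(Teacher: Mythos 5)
Your high-level reduction is sound: separate $k$ and $l$ by disjoint basic clopens $B_k,B_l\subseteq a$, observe that the case where both traces are $J$-positive is trivial, and in the remaining case try to augment the $J$-null side(s) by $J$-positive clopen material drawn from $a\setminus(B_k\cup B_l)$, which you correctly note is $J$-positive. Up to this point you are essentially reproducing the paper's reduction to the claim that $a\setminus(W_k\cup W_l)$ contains two disjoint $J$-positive clopen subsets (in the paper, $W_k,W_l$ are \emph{chosen} so the remainder is $J$-positive, whereas you get this for free in the hard case). The problems begin with how you try to produce those clopen pieces.

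First, the iterative use of countable separation does not run. Countable separation produces a $J$-positive basic clopen $c_{n_1}$ disjoint from a prescribed set in $J$ and meeting a prescribed $J$-positive set in a $J$-positive set. But once $c_{n_1}$ has been found, you would need to grow the "avoid" set to include $c_{n_1}\cap D$; since $c_{n_1}\cap D\notin J$, the hypothesis of countable separation fails and there is no next step. Equivalently, after removing $c_{n_1}$, the remaining part of $a$ may well lie in $J$ --- the whole point is that $J$ restricted to $a\setminus(B_k\cup B_l)$ could a priori be a maximal ideal, and nothing in your argument rules this out. Second, the $c_{n_1}$ you do obtain is only known to meet $a$ in a $J$-positive set; it need not be \emph{contained} in $a$ unless $a$ is clopen, in which case you should say so and take a $J$-positive basic clopen piece of $c_{n_1}\cap a$. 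Third, the entire "shrink diameters so accumulation points avoid $D$" machinery is both unjustified and unnecessary: if you could extract infinitely many pairwise disjoint $J$-positive basic clopens in $a\setminus(B_k\cup B_l)$, then two of them would already finish the proof (a finite union of clopens is clopen, so no boundary control is needed). The fact that you reach for an infinite construction is a symptom of the missing idea.

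What is missing is precisely the step the paper supplies: use the representing $\sigma$-ideal $I$ directly. Set $A=\cl(a')$ where $a'=a\setminus(B_k\cup B_l)$, pass to the relatively $I$-positive kernel $A'=A\setminus\bigcup\{U:U\mbox{ basic open, }U\cap A\in I\}$ (still $I$-positive since $I$ is a $\sigma$-ideal), pick $x\in A'$, and find a basic clopen $V\ni x$ with $A\setminus V\notin I$ (which exists, again by $\sigma$-additivity and the fact that singletons are in $I$). Then $A\cap V\notin I$ because $x\in A'$, so $a'\cap V$ and $a'\setminus V$ are both $J$-positive, disjoint, and clopen relative to $a$. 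This is the short argument you need in place of the iterative extraction; without it, your proof of the hard case does not go through.
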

  \begin{proof}
    Let $W_k,W_l$ be disjoint clopen neighborhoods of $k$
    and $l$ in $a$ such that $a\setminus (W_k\cup W_l)$ is
    $J$-positive. Note that such neighborhoods must exist,
    since otherwise $\cl(a)$ would be covered by
    $$\bigcup\{\cl(a\setminus (W_k\cup W_l): W_k,W_l\mbox{
      clopen neighborhoods of }k,l\}\cup\{k,l\}.$$ Let
    $a'=a\setminus (W_k\cup W_l)$. It is enough to show that
    there are two disjoint $J$-positive clopen subsets of
    $a'$. This is to say that $J$ is not a maximal ideal
    below $a'$. Write $A=\cl(a')$ and let
    $A'=A\setminus\bigcup\{U:U\mbox{ basic open and } U\cap
    A\in I\}$. Obviously, $A'\notin I$ and pick $x\in
    A'$. Again, note that here must be a basic clopen
    neighboorhood $V$ of $x$ such that $A\setminus V\notin
    I$ since otherwise $A$ would be covered by
    $\bigcup\{A\setminus V: V\mbox{ basic clopen
      neighboorhood of }x\}\cup\{x\}$ and belong to
    $I$. Pick such $V$ and let $b=a'\cap V$ and
    $c=a'\setminus V$. Now $b$ and $c$ are disjoint
    $J$-positive subsets of $a'$.
  \end{proof}

  \begin{lemma}\label{openclopen}
    Suppose $c\subseteq D$ is a $J$-positive clopen set and
    $b\subseteq c$ is open such that $c\setminus b\in J$. If
    $b_n\subseteq b$ are $J$-positive clopen sets with
    $b=\bigcup_n b_n$, then for every $J$-positive set
    $d\subseteq b$ there is $n$ with $d\cap b_n\notin
    J$.
  \end{lemma}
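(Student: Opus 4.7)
The plan is to argue by contradiction: I would suppose $d\cap b_n\in J$ for every $n$, and derive $d\in J$, contradicting that $d$ is $J$-positive. The first move is to invoke the construction of Section~\ref{sec:char} to fix a $\sigma$-ideal $I$ of compact subsets of $\cantor$ with $J=J_I$; then $\cl(d\cap b_n)\in I$ for every $n$, $\cl(c\setminus b)\in I$, and the goal reduces to expressing $\cl(d)$ as a countable union of compact members of $I$, after which the $\sigma$-ideal property of $I$ will finish the job.

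I would then pick, for each $n$, an open $V_n\subseteq\cantor$ with $V_n\cap D=b_n$, and set $U=\bigcup_n V_n$, so $U\cap D=b$. If $x\in\cl(d)\cap U$ then $x\in V_n$ for some $n$, and since $V_n$ is open, any sequence from $d\subseteq D$ converging to $x$ is eventually in $V_n\cap D=b_n$, giving $x\in\cl(d\cap b_n)$. Hence $\cl(d)\cap U\subseteq\bigcup_n\cl(d\cap b_n)$. Since $D$ is dense in $\cantor$, $U\cap D=b$ is dense in the open set $U$, so $\cl(b)=\cl(U)$ and therefore $\cl(d)\setminus U\subseteq\cl(U)\setminus U$, a closed piece of the topological boundary of $U$.

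The hard part will be to show that this boundary piece is contained in $\cl(c\setminus b)$. The plan is to fix an open $C\subseteq\cantor$ with $C\cap D=c$ (using clopenness of $c$ in $D$) and $U\subseteq C$, to observe that $c\setminus b=D\cap(C\setminus U)$, and then, for each $x\in\cl(d)\setminus U\subseteq C$, to produce a sequence of $D$-points in $C\setminus U\subseteq c\setminus b$ converging to $x$, exploiting density of $D$ in $\cantor$ together with the specific interplay between the clopen $C$ and the open $U\subseteq C$ in whose boundary $x$ sits. The main subtlety, and what I expect to be the principal technical content, is that a priori a boundary point of $U$ lying in $C$ need not be a limit of $c\setminus b$, and ruling this out requires a careful use of the cover $b=\bigcup_n b_n$ by clopens together with the clopen structure of $c$. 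Once $\cl(d)\setminus U\subseteq\cl(c\setminus b)$ is established, one has
\[
\cl(d)\ \subseteq\ \cl(c\setminus b)\cup\bigcup_n\cl(d\cap b_n),
\]
a countable union of compact members of $I$ whose union $\cl(d)$ is itself compact; the $\sigma$-ideal property of $I$ then forces $\cl(d)\in I$, so $d\in J$, yielding the desired contradiction.
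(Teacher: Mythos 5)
Your reduction to the single closure estimate $\cl(d)\setminus U\subseteq\cl(c\setminus b)$ is natural, but it is precisely this estimate that the proposal never establishes---you only announce an intention to ``rule out'' boundary points of $U$ not limiting onto $c\setminus b$, and the lemma's hypotheses alone do not permit that. Concretely, let $J$ be the ideal of subsets of $D=\Q\subseteq\cantor$ with countable closure (so $I$ is the $\sigma$-ideal of countable compact sets), take $c=D$ and $b=D\setminus\{x_0\}$ for a point $x_0\in D$, so that $c\setminus b=\{x_0\}\in J$. Fix a nowhere dense Cantor set $P\subseteq\cantor$ disjoint from $D$, partition the open set $\cantor\setminus(P\cup\{x_0\})$ into nonempty clopen sets $B_n$, and put $b_n=B_n\cap D$: each $b_n$ is a $J$-positive clopen subset of $b$, and $b=\bigcup_n b_n$. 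Because the $B_n$ avoid $P$, no finite union of them covers a clopen $W$ meeting $P$, so one can enumerate such $W$ and inductively choose $d_W\in W\cap D$ lying in pairwise distinct $B_n$'s. The resulting $d\subseteq b$ meets each $b_n$ in at most one point, so every $d\cap b_n\in J$, yet $\cl(d)\supseteq P$, so $d\notin J$. With $U=\bigcup_n B_n$ one has $\cl(d)\setminus U\supseteq P$ while $\cl(c\setminus b)=\{x_0\}$: the inclusion you need fails badly, because the boundary of $U$ inside $C$ can carry an entire perfect set disjoint from $D$, about which $c\setminus b$ records nothing.

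It is also worth flagging that this is not the route the paper takes. The paper does not reason about $\cl(d)$ directly; it first invokes weak selectivity of $J$ (Proposition~4.3 of \cite{sz}) to thin $d$ to a $J$-positive $e\subseteq d$ meeting each $b_n$ at most once, observes that $e$ is then closed discrete in $b$, and only for this thinned set asserts $\cl(e)\subseteq e\cup\cl(c\setminus b)$. Your plan drops the weak-selectivity step entirely and so requires the analogous containment for $\cl(d)$ itself, a strictly stronger claim. Be aware, though, that the $d$ built above already satisfies $|d\cap b_n|\le 1$, so the discreteness supplied by thinning is not by itself what makes the closure estimate go through; a correct argument must identify and use additional structure on the cover $\{b_n\}$ beyond what the proposal (and, one should check, the lemma's stated hypotheses) actually records.
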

  \begin{proof}
    Suppose that $d\subseteq b$ is $J$-positive. We need to
    show that $b_n\cap d\notin J$ for some $n$. Suppose
    otherwise. Since $J$ is weakly selective
    \cite[Proposition 4.3]{sz}, there is $J$-positive
    $e\subseteq d$ such that $|e\cap b_n|\leq 1$ for each
    $n$. This means that $e\cap b$ is discrete and hence
    $\cl(e)\subseteq e\cup \cl(c\setminus b)$ since $c$ was
    a clopen set. This implies that $\cl(e)\in I$ and
    contradicts the fact that $e\notin J$.
  \end{proof}

  We will construct a Hausdorff, zero-dimensional topology
  $\tau$ on $D$ which has no isolated points and such that:
  \begin{itemize}
  \item[(i)] all sets in $J$ are nowhere dense in $\tau$,
  \item[(ii)] for each $b\notin J$ and $a\in J$ there is a
    $\tau$-clopen set $U\subseteq D$ such that $U\cap
    a=\emptyset$ and $U\cap b\notin J$.
  \end{itemize}

  Given that, as in the proof of Theorem \ref{char}, find a
  homeomorphism of $(\omega,\tau)$ and $\Q$ and embed it
  into the Cantor set. Let $I$ be the $\sigma$-ideal
  generated by the sets $\cl_\tau(a)$ for $a\in J$ (here
  $\cl_\tau$ stands for the closure taken in the Cantor set
  in which $(\omega,\tau)$ is embedded).

  The condition (i) implies that the elements of $I$ are
  nowhere dense in the Cantor set and we need to show that
  $J=J_I$. Indeed, if $a\in J$, then $\cl_\tau(a)\in I$ and
  so $a\in J_I$. What is left to prove is that if $b\notin
  J$ and $a_n\in J$, then $\cl_\tau(b)\not\subseteq\bigcup_n
  \cl_\tau(a_n)$. By induction construct a decreasing
  sequence of $\tau$-clopen sets $U_n$ with $a_n\cap
  U_n=\emptyset$ and $U_n\cap b\notin J$ and
  $\mathrm{diam}(U_n)<1\slash n$ (the diameter is computed
  with respect to the usual metric on the Cantor set in
  which $(\omega,\tau)$ is embedded). Having $U_n$
  constructed, let $b_n=U_n\cap b$. Using (ii), find a
  $\tau$-clopen set $U_{n+1}$ such that $U_{n+1}\cap
  a_n=\emptyset$ and $U_{n+1}\cap b_n\notin J$. If needed,
  shrink it so that $\mathrm{diam}(U_{n+1})<1\slash(n+1)$ and still
  $U_{n+1}\cap b_n\notin J$ (this is possible as $U_{n+1}$
  is covered with finitely many relatively clopen sets of
  diameter less than $1\slash(n+1)$). At the end, let
  $x\in\cantor$ belong to $\bigcap_n U_n$. Then
  $x\in\cl_\tau(b)\setminus\bigcup\cl_\tau(a_n)$. This shows
  that if $b\notin J$, then $\cl_\tau(b)\notin I$ and thus
  proves that $J=J_I$.

  To construct the topology $\tau$ we will construct sets
  $a_n$ such that $\{a_n:n<\omega\}$ separates points in $D$
  and (ii), (iii) and (iv) hold, where
  \begin{itemize}
  \item[(iii)] each $a_n$ is clopen and all elements in the
    algebra generated by $\{a_i:i<n\}$ are either empty or
    $J$-positive,
  \item[(iv)] for each $n,m$ and $J$-positive set
    $b\subseteq a_n\cap c_m$ there is $k>n$ such that
    $a_k\subseteq a_n\cap c_m $ and $a_k\cap b\notin J$.
  \end{itemize}

  Having the sets $a_n$ constructed, take them as a clopen
  basis of the topology $\tau$, which is then Hausdorff,
  zero-dimensional and has no isolated points by (iii). To
  see (i) note that if $a_n$ is $\tau$-clopen and $b\in J$,
  then there is $m$ such that $a_n\cap c_m$ is disjoint from
  $b$ since $c_m$'s separate $J$. Then, by (iv) applied to
  $b=a_n\cap c_m$ there is $k$ with $a_k\subseteq a_n\cap
  c_m$ and in particular $a_k$ is disjoint from $b$. This
  shows that $b$ is nowhere dense in $\tau$.

  The construction of the sets $a_n$ will be by induction
  with $a_0=D$. In the construction we will make sure that
  (iii) and (v) hold where
  \begin{itemize}
  \item[(v)] for each $n,m$ there is a sequence $k_i$ such
    that $\bigcup_i a_{k_i}\subseteq a_n\cap c_m$ and
    $(a_n\cap c_m)\setminus\bigcup_i a_{k_i}\in J$ and for
    each $b\subseteq a_n\cap c_m$ with $b\notin J$ there is
    $i$ such that $b\cap a_{k_i}\notin J$.
  \end{itemize}

  Note that (v) implies that (ii) and (iv) hold. Indeed,
  (iv) follows from (v) immmediately. To see (ii), take
  $b\notin J$ and $a\in J$ and let $c_m$ be such that
  $c_m\cap a=\emptyset$ and $c_m\cap b\notin J$. Apply (v)
  to $a_0=D$ and $c_m$ and find $k_i$ such that
  $a_{k_i}\subseteq c_m$ and $b\cap a_{k_i}\notin J$. This
  proves (ii).

  Now we are ready for the induction that takes care of
  (iii) and (v). Start with $a_0=D$. Given $k$ write $A_k$
  for the algebra generated by $\{a_i:i<k\}$. At each step
  of the induction we will make a sequence of promises. A
  \textit{promise} is a pair $(j,c)$, where $j\in D$ and
  $c\subseteq D$ is clopen and $J$-positive. The meaning of
  a promise is as follows. Given a set $a$ in the algebra
  generated by the sets constructed so far and its subset
  $c\subseteq a$ which is clopen and such that $a\setminus
  c\in J$ we will make sure that $c$ is $\tau$-open and
  promise to construct a sequence of $\tau$-clopen and
  clopen sets $a_{k_i}$ in the future so that
  $c=\bigcup_{i}a_{k_i}$. To do so, we must construct one
  $a_{k_i}\subseteq c$ for each $j\in c$ with $j\in
  a_{k_i}$. Thus, the set of promises made in such case is
  the set $\{(j,c):j\in c\}$. The inductive construction
  will use bookkeeping in order to fulfill all promises made
  during its steps.

  Enumerate also all pairs of distinct elements of $D$ as
  $(k_n,l_n)$. At each step $k$ we will consider one of the
  three possibilities:
  \begin{itemize}
  \item[(a)] either we construct $a_k$ to separate $k_n$ and $l_n$,
  \item[(b)] or we consider a pair $a_i, c_m$ for some $i<n$
    and make sure (possibly making promises) that there will
    be a sequence $a_{k_n}$ such that that $\bigcup_n
    a_{k_n}\subseteq c_m\cap a_i$ and for each $J$-positive
    $b\subseteq a_i\cap c_m$ there is $j$ with $b\cap
    a_{k_j}\notin J$,
  \item[(c)] or we consider a promise $(j,c)$ made so far
    and construct $a_k$ so that $j\in a_k$ and $a_k\subseteq
    c$.
  \end{itemize}
  We start with $a_0=D$. Suppose everything is constructed
  so far and we are at step $k$. There are three cases.

  \textbf{Case (a)}. We need to separate $k_n$ and $l_n$. If
  $k_n$ and $l_n$ are already separated by the algebra
  $A_k$, then put $a_k=\emptyset$. Otherwise, find an atom
  $d$ of this algebra with $k_n,l_n\in d$. Use Lemma
  \ref{separation} to find two disjoint clopen $J$-positive
  subsets $U,V\subseteq d$ with $k_n\in U$ and $l_n\in
  V$. Put $a_k=U$.

  \textbf{Case (b)}. Suppose we consider the pair
  $a_i,c_m$. Enumerate all the atoms of the algebra $A_k$
  below $a_i$ as $\{d_l:l<L\}$. Let $c'$ be obtained by
  removing from $c_m$ all the intersections $c_m\cap d_l$
  which are in $J$. Note that $c'$ is still clopen and
  $c\setminus c'\in J$. We will make sure to construct a
  sequence $a_{k_j}$ as in (iii) so that $c'=\bigcup_j
  a_{k_j}$ and for each $b\subseteq c'\cap a_i$ there is $j$
  with $a_{k_j}\cap b\notin J$. Define $P,Q$ to be subsets of the
  set of atoms of $A_k$ with
  \begin{itemize}
  \item $d\in P$ if $c'\cap d\notin J$ and $d\setminus
    c'\notin J$,
  \item $d\in Q$ if $c'\cap d$ is co-$J$ in $d$.
  \end{itemize}
  For each $d\in P$ let $a_i'=d\cap c'$ and put
  $a_k=\bigcup_{i<|P|}a_i'$. This defines $a_k$ and if $Q$
  is empty, then there is nothing more to do. However, if
  $Q$ is nonempty, then for each $d\in Q$ we make a sequence
  of promises to construct for each $d\in Q$ a sequence
  $\langle a_{k_j}:j<\omega\rangle$ of sets which are clopen
  and such that $d\cap c'=\bigcup_j a_{k_j}$. The fact that
  $a_{k_j}$ are clopen together with Lemma \ref{openclopen}
  will guarantee that (v) is satisfied. Thus, we add to our
  list of promises all pairs $(j,c'\cap d)$ with $d\in Q$ and
  $j\in c'\cap d$.

  \textbf{Case (c)}. Suppose we are in a position to fulfill
  a promise $(j,c)$. Find an atom $d$ of $A_k$ with $j\in
  d$. Since $c$ was co-$J$ in an atom $d'$ of some $A_i$
  with $i<k$ with $j\in d'$, it must be the case that $c\cap
  d$ is co-$J$ in $d$. Use Lemma \ref{separation} to find
  two disjoint $J$-positive clopen subsets $U,V$ of $c\cap
  d$ with $j\in U$. Put $a_k=U$.

  This ends the inductive construction and the proof.
\end{proof}

\section{Descriptive complexity of ideals with topological
  representations}\label{sec:complex}

\begin{proof}[Proof of Theorem \ref{complex}]
  Suppose $J$ is analytic and has a topological
  representation. By Theorem \ref{meager} there is a
  $\sigma$-ideal $I$ of compact subsets of a Polish space
  $X$ with a countable dense set $D$ such that $J_I$ is
  isomorphic to $J$. Consider the function $a\mapsto\cl(a)$
  from $P(D)$ to $K(X)$ and note that it is Baire class
  1. Since $J_I$ is analytic, $I=\left\{A\in K(X):\exists
    b\in J_I\ A\subseteq\cl(b)\right\}$ is also analytic,
  and hence $\mathbf{\Pi}^0_2$ by the theorem of
  Kechris--Louveau--Woodin \cite[Theorem
  11]{klw}. Therefore, $J$ must be $\mathbf{\Pi}^0_3$ as a
  preimage of a $\mathbf{\Pi}^0_2$ set by a Baire class 1
  function.

  To check that $J$ is in fact $\mathbf{\Pi}^0_3$-complete,
  we need the following standard fact.

  \begin{lemma}\label{LemmaFsigmaHard}
    All analytic ideals are $\mathbf{\Sigma}^0_2$-hard.
  \end{lemma}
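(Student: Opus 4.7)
The plan is to continuously reduce a canonical $\mathbf{\Sigma}^0_2$-complete set---namely the ideal $\Fin$ of finite subsets of $\omega$, viewed as a subset of $\cantor$ via characteristic functions---to the given analytic ideal $J$. Since $\Fin$ is a standard example of an $F_\sigma$-complete set, producing such a continuous reduction immediately yields $\mathbf{\Sigma}^0_2$-hardness of $J$.

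To obtain the reduction, I will invoke the Jalali-Naini--Mathias--Talagrand theorem that was already recalled in the introduction. Since $J$ is analytic, it has the Baire property (by the classical theorem that every $\ana$ set has the Baire property), so JNMT applies and produces a finite-to-one function $f:\omega\rightarrow\omega$ witnessing $\Fin\leq_\RB J$. In other words, for every $a\subseteq\omega$, $a$ is finite if and only if $f^{-1}(a)\in J$.

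Given such an $f$, I define $\varphi:\cantor\rightarrow\cantor$ by $\varphi(a)(n)=a(f(n))$, i.e.\ $\varphi(a)$ is the characteristic function of $f^{-1}(a)$. Since each coordinate of $\varphi(a)$ depends on only a single coordinate of $a$, the map $\varphi$ is continuous (in fact Lipschitz). The reduction property $a\in\Fin\Leftrightarrow\varphi(a)\in J$ is then just the definition of $\Fin\leq_\RB J$, so $\varphi$ is a continuous reduction of $\Fin$ to $J$, as required.

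There is no serious obstacle here: the lemma is essentially a direct translation of the JNMT theorem into the language of Wadge reductions, combined with the standard fact that $\Fin$ is $\fsigma$-complete. The only things that need to be spelled out are the application of Baire property to analytic sets and the continuity of the induced map $a\mapsto f^{-1}(a)$, both of which are routine.
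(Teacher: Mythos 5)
Your proof is correct and follows exactly the same route as the paper: both invoke the Jalali-Naini--Mathias--Talagrand theorem (using that analytic sets have the Baire property) to get $\Fin\leq_{\RB}J$, and then observe that the map $a\mapsto f^{-1}(a)$ is a continuous reduction of the $\fsigma$-complete set $\Fin$ to $J$. You simply spell out the continuity of the induced map, which the paper leaves implicit.
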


  \begin{proof}
    This follows directly from the
    Jalali-Naini--Mathias--Talagrand theorem \cite[Theorem
    4.1.2]{bartoszynski}. Indeed, if $J$ is analytic, then
    it has the Baire property and hence
    $\Fin\leq_{\mathrm{RB}}J$. From this we easily get a
    continuous reduction from $\Fin\subseteq2^\omega$ (which
    is $\mathbf{\Sigma}^0_2$-complete) to $J$.
  \end{proof}

  We will now show that $J_I$ is
  $\mathbf{\Pi}^0_3$-hard. The argument is based on ideas
  from \cite{msz}. Fix a point $x\in X$. Fix also a
  compatible metric on $X$ and let $V_n=\ball(2^{-n},x)$ for
  each $n\in\omega$. For each $n$ define an ideal
  $J^n=\left\{a\cap V_n: a\in J_I\right\}$ on $D\cap
  V_n$. Note that each $J^n$ is analytic, and hence
  $\mathbf{\Sigma}^0_2$-hard by Lemma
  \ref{LemmaFsigmaHard}. Therefore for each $n$ there is
  $\phi_n:2^\omega\rightarrow P(D\cap V_n)$ such that
  $\phi^{-1}_n(J^n)=\Fin$. Define
  $\phi:\left(P(\omega)\right)^\omega\rightarrow P(D)$ by
  $\phi(\langle a_n:
  n\in\omega\rangle)=\bigcup_{n\in\omega}\phi_n(a_n)$.  Let
  $W$ be the set
  $$\{\langle a_n:n\in\omega\rangle:\forall n\in\omega\ 
  a_n\in\Fin\}=\{\langle a_n: n\in\omega\rangle:\forall
  n\in\omega\ \phi_n(a_n)\in J^n\}.$$ and note that $W$ is
  $\mathbf{\Pi}^0_3$-complete.  To finish the proof it
  suffices to show that $\phi^{-1}(J_I)=W$.

  Suppose first that $\phi(\langle a_n:n\in\omega\rangle)\in
  J_I$, i.e $\bigcup_{n\in\omega}\phi_n(a_n)\in J_I$. Then
  for each $n$ we have $\phi_n(a_n)\in J_I$ and
  $\phi_n(a_n)\subseteq V_n$. Hence, for each $n$ the set
  $\phi_n(a_n)$ is in $J^n$ and $\langle
  a_n:n\in\omega\rangle\in W$.

  On the other hand, if $\langle a_n:n\in\omega\rangle\in
  W$, i.e. for each $n$ we have $\phi_n(a_n)\in J^n\subseteq
  J_I$, then $\cl(\phi_n(a_n))\in I$. Since $I$ is a
  $\sigma$-ideal,
  $\bigcup_{n\in\omega}\cl(\phi_n(a_n))\cup\left\{x\right\}$
  is in $I$. To prove that
  $\bigcup_{n\in\omega}\phi_n(a_n)\in J_I$ it suffices to
  show that $\cl(\bigcup_{n\in\omega}\phi_n(a_n))\subseteq
  \bigcup_{n\in\omega}\cl(\phi_n(a_n))\cup\left\{x\right\}$. Indeed,
  if $y\in\cl(\bigcup_{n\in\omega}\phi_n(a_n))$, then there
  is a sequence $x_n$ of elements of
  $\bigcup_{n\in\omega}\phi_n(a_n)$ convergent to $y$. There
  are two cases:

  \begin{itemize}
  \item either there is $m$ such that there are infinitely
    many $x_n$'s in $\phi_m(a_m)$. In this case $y$ is an
    element of $\cl(\phi_m(a_m))$;
  \item or in each of $\phi_m(a_m)$'s there are only
    finitely many $x_n$'s. But then $x_n$'s must converge to
    $x$ and hence $y=x$.
  \end{itemize}
  In both cases we have $y\in
  \bigcup_{n\in\omega}\cl(\phi_n(a_n))\cup\left\{x\right\}$,
  which ends the proof.
\end{proof}

Corollary \ref{klw} is now an adaptation of the proof of the
Kechris--Louveau--Woodin theorem.

\begin{proof}[Proof of Corollary \ref{klw}]
  Suppose $J=J_I$ for a $\sigma$-ideal $I$ on a compact
  space $X$ with a dense subset $D\subseteq X$ on which $J$
  lives. Consider the family $F=\{C\in K(X): C\cap D\mbox{
    is dense in }C\}$ and note that $F$ is Borel and the map
  $C\mapsto C\cap D$ is a Borel function from $F$ to
  $P(D)$. Let $I'=I\cap F$ and note that $I'$ is coanalytic
  as it is the preimage of $J$ by the above function.

  If $I'$ is $\gdelta$, then $J$ is $\fsigmadelta$-complete
  by Theorem \ref{complex}. On the other hand, if $I'$ is
  not $\gdelta$, then by the Hurewicz separation theorem
  \cite[Theorem 21.18]{kechris}, there is a Cantor set
  $C\subseteq K(X)$ such that $C\cap I'=\Q$. Now consider
  the function $K\mapsto(\bigcup K)\cap D$ from $K(C)$ to
  $P(D)$. Since $\bigcup K$ is compact for a compact
  $K\subseteq K(C)$, we have that if $K\subseteq\Q$, then
  $\bigcup K\in I'$ and hence $(\bigcup K)\cap D\in J_I$. On
  the other hand, if $K\not\subseteq \Q$, then $\bigcup K$
  does not belong to $I$ but still belongs to $F$, and hence
  $(\bigcup K)\cap D\notin J$. This proves that the above
  function is a reduction from $K(\Q)$ (which is a
  $\coana$-complete subset of $K(\cantor)$) to $J$ and shows
  $\coana$-completeness of $J$.
\end{proof}

\section{Weakly selective ideals}\label{sec:ws}

We follow standard set-theoretic notation concerning
trees. In particular, a \textit{branch} through a tree
$T\subseteq\omega^{<\omega}$ is a sequence $t\in\baire$ such
that $t\restriction n\in T$ for every $n\in\omega$. The set
of all branches through a tree $T$ is denoted by
$[T]$. Given a tree $T\subseteq\omega^{<\omega}$, we say
that branch $(n_1,n_2,\ldots)\in[T]$ is
\textit{$J$-positive} if $\{n_1,n_2,\ldots\}\notin J$. Given
a tree $T\subseteq\omega^{<\omega}$ and $t\in T$ we write
$\mathrm{split}_T(t)=\{n\in\omega:t^\smallfrown n\in
T\}$. Given a family $A$ of subsets of $\omega$, we say that
a tree $T\subseteq\omega^{<\omega}$ is
\textit{$A$-splitting} if for each $t\in T$ we have
$\mathrm{split}_T(t)\in A$. Given a tree $T$, we call the
sets $\mathrm{split}_T(t)$ for $t\in T$ the
\textit{splitting sets of $T$}.

A subclass of weakly selective ideals are the selective
ideals (see Mathias \cite{mathias}, Farah
\cite{farah.semiselective} and Grigorieff
\cite{grigorieff}). An ideal $J$ is \textit{selective}
\cite[Definition 1.7]{grigorieff} if every $J$-partition of
$\omega$ admits a $J$-positive selector. Here, a
\textit{$J$-partition} of $\omega$ is a partition
$\omega=\bigcup_n a_n$ with $\bigcup_{m>n}a_m\notin J$ for
every $m$. Equivalently \cite[Definition
1.1]{farah.semiselective}, $J$ is selective if any sequence
of $J$-positive sets $a_n$ has a $J$-positive
\textit{diagonalization}, i.e. a set $a\notin J$ such that
$a\setminus n\subseteq a_n$ for each $n$. Selective ideals
have been studied by Grigorieff, who proved \cite[Corollary
1.15]{grigorieff} a characterization of selectivity in terms
of branches of trees: an ideal $J$ is selective if and only
if every tree $T$ with the property that any finite
intersection of splitting sets of $T$ is in $J^+$, has a
$J$-positive branch. The following lemma provides a similar
characterization of weak selectivity. The proof is similar
to that of Grigorieff and one implication is implicit in
\cite{meza-alcantara}.

\begin{lemma}\label{grigorieff}
  Let $J$ be an ideal on $\omega$. The following are
  equivalent:
  \begin{itemize}
  \item[(a)] $J$ is weakly selective,
  \item[(b)] for each $J$-positive $b$, every
    co-$(J\restriction b)$-splitting tree has a $J$-positive
    branch.
  \end{itemize}
\end{lemma}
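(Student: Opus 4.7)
I would prove the two implications separately. For $(b)\Rightarrow(a)$, given $b\notin J$ and $f\colon b\to\omega$, I would first dispose of the case where some fiber $f^{-1}(n)$ is $J$-positive, on which $f$ is constant. Otherwise every fiber belongs to $J$, and I would form the tree $T\subseteq b^{<\omega}$ whose nodes are strictly increasing tuples in $b$ with pairwise distinct $f$-values. For such a node $t=(m_0,\dots,m_{k-1})$ the splitting set equals $\{n\in b: n>m_{k-1} \text{ and } f(n)\notin\{f(m_0),\dots,f(m_{k-1})\}\}$, whose complement in $b$ is a finite union of an initial segment and of the fibers $f^{-1}(f(m_i))$, hence in $J$. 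So $T$ is co-$(J\restriction b)$-splitting, and (b) yields a $J$-positive branch $(m_0,m_1,\dots)$ with $f\restriction\{m_k\}$ one-to-one.

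For $(a)\Rightarrow(b)$, the plan is to adapt Grigorieff's argument, replacing selectivity by weak selectivity. Given $b\notin J$ and a co-$(J\restriction b)$-splitting tree $T$, enumerate $T=\{t_k\}_{k<\omega}$ and set $R_k=\bigcap_{i\leq k}S_{t_i}$; each $R_k$ is co-$J$ in $b$ as a finite intersection of co-$J$ sets. Define $g\colon b\to\omega\cup\{\infty\}$ by $g(n)=\sup\{k:n\in R_k\}$. The finite-valued fibers $g^{-1}(k)=R_k\setminus R_{k+1}\subseteq b\setminus S_{t_{k+1}}$ lie in $J$. If $g^{-1}(\infty)$ is $J$-positive, it is contained in $\bigcap_{t\in T}S_t$, and any enumeration of a $J$-positive subset of it is inductively a branch of $T$ with $J$-positive range. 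Otherwise $g$ is $\omega$-valued on the $J$-positive set $b\setminus g^{-1}(\infty)$ with $J$-fibers, so weak selectivity produces $a\notin J$ with $g\restriction a$ one-to-one (constancy is impossible). Listing $a=\{c_0,c_1,\dots\}$ in increasing $g$-order yields $g(c_k)\geq k$, hence $c_k\in R_k\subseteq S_{t_i}$ for all $i\leq k$, equivalently $|a\setminus S_{t_i}|\leq i$ for each $i$.

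Realising $(c_0,c_1,\dots)$ as an actual branch of $T$ is the main technical obstacle: one needs the index of each prefix $(c_0,\dots,c_{k-1})$ in the enumeration $\{t_k\}$ to be bounded by $g(c_k)$. I plan to resolve the apparent circularity between the enumeration of $T$ and the weak-selectivity choice by a coordinated back-and-forth construction, in which, as each $c_k$ is determined, the next index of $T$'s enumeration is assigned to the emerging prefix while the remaining nodes are interleaved via a fixed auxiliary bijection. Alternatively, one can keep the enumeration rigid and enumerate $a$ in a tree-compatible order by a greedy procedure that exploits the uniform finiteness of $a\setminus S_{t_i}$, applying weak selectivity a second time if necessary to sparsify the $g$-values. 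Either way, weak selectivity supplies exactly the combinatorial control needed to extract a branch with $J$-positive range, which is the key point of the lemma.
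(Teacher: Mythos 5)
Your proof of (b)$\Rightarrow$(a) is correct and is essentially the paper's argument (the paper phrases it contrapositively, but the tree of tuples with pairwise distinct $f$-values is the same). The problem is in (a)$\Rightarrow$(b), and you flag it yourself. After applying weak selectivity to the function $g$ you get a $J$-positive set $a=\{c_0<c_1<\dots\}$ (ordered by $g$) with $\lvert a\setminus S_{t_i}\rvert\leq i$ for every $i$; this is just a $J$-positive pseudointersection of the splitting sets of $T$, and a pseudointersection need not contain, nor be rearrangeable into, a branch: whether $(c_0,\dots,c_{k-1})$ lies in $T$ depends on which index the enumeration of $T$ assigns to that particular tuple, and you have no control over that index once the set $a$ has been produced. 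Neither of your two proposed fixes is actually carried out, and neither is clearly workable as stated. The back-and-forth plan is circular in exactly the way you worry about --- $a$ is extracted by weak selectivity from $g$, which depends on the enumeration of $T$, so you cannot let that enumeration react to the $c_k$'s. The greedy plan can produce a branch inside $a$, but gives no reason for the resulting branch to have $J$-positive range; the uniform bound $\lvert a\setminus S_{t_i}\rvert\leq i$ by itself does not force a greedily chosen subsequence to be $J$-positive.

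The paper closes exactly this gap with a second, differently shaped application of weak selectivity. Having a $J$-positive pseudointersection $c$ of the splitting sets, one defines a fast-growing sequence $k_0<k_1<\dots$ inside $c$ so that $c\setminus k_{n+1}$ is contained in $\mathrm{split}_T(t)$ for \emph{every} node $t\in T$ whose length and entries are all $\leq k_n$ (there are only finitely many such $t$, and $c$ is almost contained in each of their splitting sets, so $k_{n+1}$ exists). The intervals $[k_n,k_{n+1})$ partition $c$ into finite pieces, hence by weak selectivity there is a $J$-positive selector $d$; setting $d_0=d\cap\bigcup_n[k_{2n},k_{2n+1})$ and $d_1=d\cap\bigcup_n[k_{2n+1},k_{2n+2})$, the built-in gap of one full interval between consecutive elements makes both $d_0$ and $d_1$ descend through $T$ (each next element lies beyond the $k$-threshold that covers all possible prefixes), and one of $d_0,d_1$ is $J$-positive. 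That spacing argument, quantifying over all short nodes of $T$ rather than over a fixed enumeration, is the missing ingredient; without something like it, your implication (a)$\Rightarrow$(b) is incomplete.
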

\begin{proof}
  (b)$\Rightarrow$(a). Suppose $J$ is not weakly selective,
  i.e. there is a $J$-positive $b$ and a function
  $f:b\rightarrow\omega$ which is neither constant nor
  one-to-one on any $J$-positive subset of $b$. This means
  that preimages of single points belong to $J$.  We will
  produce a co-$(J\restriction b)$-splitting tree whose all
  branches are in $J$. Define the tree $T\subseteq
  b^{<\omega}$ inductively as follows. If
  $t=(n_0,\ldots,n_k)\in b^{<\omega}$ belongs to $T$, then
  add to $T$ all $t^\smallfrown n$ with
  $n\notin\bigcup_{i\leq k}f^{-1}(n_i)$. Note that this is a
  co-$(J\restriction b)$-splitting tree and if
  $x=(n_0,n_1,\ldots)$ is a branch through $T$, then $f$ is
  one-to-one on $\{n_0,n_1,\ldots\}$. Thus, all branches of
  $T$ are in $J$.

  (a)$\Rightarrow$(b). Suppose now $J$ is weakly selective,
  $b$ is $J$-positive and $T$ is a co-$(J\restriction
  b)$-splitting tree. We need to produce a $J$-positive
  branch through $T$. First, note that if $J$ is weakly
  selective, then any sequence $a_n$ of sets in
  co-$(J\restriction b)$ has a $J$-positive
  pseudointersection. Indeed, if $\bigcap_n a_n\notin J$,
  then this intersection is in particular a
  pseudointersection. Otherwise the sets
  $\bigcap_{m<n}a_m\setminus\bigcup_{m\geq n}a_m$ and
  $\bigcap_n a_n$ define a partition of $b$ into sets in $J$
  and any selector of that partition is a pseudointersection
  of $a_n$'s.

  Now, let $c$ be a $J$-positive pseudointersection of the
  splitting sets of $T$. Define a sequence $k_n$ of elements
  of $c$ by induction in the following way. Let $k_0$ be the
  minimum of $c$. Let $k_{n+1}$ be the minimal element of
  $c$ bigger than $k_n$ such that $c\setminus k_{n+1}$ is
  contained in $\mathrm{split}_T(t)$ for each $t\in T$ whose
  length is not greater than $k_n$ and whose all elements
  are not greater than $k_n$. The intervals $[k_n,k_{n+1})$
  define a partition of $c$ into finite sets. Let
  $d\subseteq c$ be a $J$-positive selector of that
  partition. Write $d_0=d\cap\bigcup_n[k_{2n},k_{2n+1})$ and
  $d_1=d\cap\bigcup_n[k_{2n+1},k_{2n+2})$. Note that by the
  definition of $k_n$'s, both sets $d_0$ and $d_1$ are
  branches through $T$. Since $d=d_0\cup d_1$, one of these
  sets must be $J$-positive and hence $T$ has a $J$-positive
  branch, as needed.
\end{proof}

The proof of Theorem \ref{ws} will be based on the above
lemma as well as on some ideas of Hru\v s\'ak from his
Category Dichotomy \cite[Theorem 5.20]{hrusak}. In
particular, we will use a game $H(J)$, invented by Hru\v
s\'ak in \cite{hrusak}.

\begin{proof}[Proof of Theorem \ref{ws}]
  (ii)$\Rightarrow$(i) Suppose $J=\bigcap_{l\in\Lambda}J_l$,
  where each $J_l$ has a topological representation. Let
  $f:b\rightarrow\omega$ be a function with $b\notin
  J$. Suppose $f$ is not constant on any $J$-positive subset
  of $b$. Pick $l\in\Lambda$ such that $b\notin J_l$. We
  will find a $J_l$-positive subset of $b$ on which $f$ is
  1-1. Let $X$ be a separable metric space, $D$ its dense
  countable subset and $I$ a $\sigma$-ideal of subsets of
  $X$ such that $J_l$ is isomorphic to $J_I$. Without loss
  of generality assume $b\subseteq D$. Write $B=\cl(b)$ and
  let $B'=B\setminus\{U: U\mbox{ is basic open and }B\cap
  U\in I\}$. Note that $B'$ is still $I$-positive. Enumerate
  all basic open sets in $X$ which intersect $B'$ into a
  sequence $\langle V_n:n<\omega\rangle$ and by induction on
  $i$ construct a sequence of points $n_i\in b$ such that
  $f(n_i)\notin\{f(n_j):j<i\}$ and $n_i\in V_i$. Once this
  is done put $b'=\{n_i:i<\omega\}$ and note that $b'\notin
  J_l$ since $\cl(b')$ contains $B'$. Obviously, then $f$ is
  1-1 on $b'$.

  To perform the construction, suppose that points $n_i$ are
  chosen for $i<k$ and let $a=b\cap V_k$. Note that $a\notin
  J_l$. By our assumption, $f$ is not constant on any
  $J_l$-positive set, so it cannot assume finitely many
  values on $a$, which implies that there is $n_k\in a$ such
  that $f(n_k)\not=f(n_i)$ for all $i<k$. This ends the
  construction.

  (i)$\Rightarrow$(ii) Suppose now that $J$ is coanalytic
  ideal of subsets of $\omega$. Let
  $D\subseteq[\omega]^\omega\times\baire$ be closed set such
  that $[\omega]^\omega\setminus J$ is the projection of
  $D$.

  Consider the following game $H'(J)$. In his $n$-th turn
  Player II picks a set $a_n\in J$. Player I responds with a
  pair $(k_n,m_n)$ with $k_n\in\omega\setminus a_n$ and
  $m_n\in n\cup\{$pass$\}$. Player I wins if at the end he
  has chosen infinitely many $m_n$'s different than 'pass'
  and $(\bar k,\bar m)$ belongs to $D$, where $\bar
  k=\{k_n:n<\omega\}$ and $\bar m$ is the sequence of those
  $m_n$'s which are not equal to 'pass'.

  The game $H'(J)$ is an unfolded version of the game $H(J)$
  in which Player II picks $a_n\in J$ and Player I responds
  just with a number $k_n\notin a_n$. Player I wins in $H(J)$
  if $\{k_n:n<\omega\}$ does not belong to $J$.

  \begin{claim}\label{strategy1}
    If Player II has a winning strategy in $H'(J)$, then he
    also has a winning strategy in $H(J)$.
  \end{claim}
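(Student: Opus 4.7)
The plan is to use the standard unfolding technique to convert a winning strategy for Player II in $H'(J)$ into one for $H(J)$. Suppose $\sigma'$ is a winning strategy for Player II in $H'(J)$. At stage $n$ of $H'(J)$, the extra data not present in $H(J)$ is the prefix $(m_0,\ldots,m_{n-1})$ with $m_i\in i\cup\{\mathrm{pass}\}$, and there are only finitely many such prefixes. To define $\sigma$ for $H(J)$, given Player I's history $(k_0,\ldots,k_{n-1})$, I would let $\sigma$ respond with the union, over all such prefixes, of the set that $\sigma'$ would prescribe against the corresponding $H'(J)$-history $((k_0,m_0),\ldots,(k_{n-1},m_{n-1}))$. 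This is a finite union of members of $J$, hence in $J$.

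To see $\sigma$ is winning in $H(J)$, I would argue by contradiction. Suppose there is a play $(a_n,k_n)$ consistent with $\sigma$ whose $\bar k:=\{k_n:n<\omega\}$ is $J$-positive. Since $D$ projects onto $[\omega]^\omega\setminus J$, pick $\bar m\in\baire$ with $(\bar k,\bar m)\in D$. I would then run a shadow play of $H'(J)$ in which Player II follows $\sigma'$ and Player I plays $(k_n,m'_n)$, where the non-pass values of $m'_n$ emit $\bar m$ in order, subject to the constraint $m'_n\in n\cup\{\mathrm{pass}\}$. A natural choice is: at stage $n$ let $j(n)$ be the count of previous non-pass moves and play $m'_n=m_{j(n)}$ if $m_{j(n)}<n$, else $m'_n=\mathrm{pass}$. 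An easy induction shows every $m_i$ is eventually emitted, so the sequence of non-pass plays is exactly $\bar m$, and infinitely many $m'_n$'s are non-pass.

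Crucially, by construction the response of $\sigma'$ at stage $n$ in this shadow play is one of the sets whose union defines $a_n$, so it is contained in $a_n$; hence $k_n$ avoids it and the shadow play is a legal run against $\sigma'$. But then Player I has produced $(\bar k,\bar m)\in D$ with infinitely many non-pass moves, winning $H'(J)$ against $\sigma'$, contradicting that $\sigma'$ is winning for Player II. The main subtlety is the bookkeeping that lets Player I simultaneously respect the $H'(J)$-rule $m'_n\in n\cup\{\mathrm{pass}\}$ and eventually emit the full witness $\bar m$; once this is arranged, the finite-branching nature of the additional information in $H'(J)$ makes the rest of the argument routine.
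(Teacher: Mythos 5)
Your proposal is correct and takes essentially the same approach as the paper's proof: unfold $H(J)$ by taking, at each stage $n$, the finite union of $\sigma'$-responses over all admissible prefixes $(m_0,\ldots,m_{n-1})$, then, given a hypothetical winning run for Player~I in $H(J)$, pick a witness $\bar m$ with $(\bar k,\bar m)\in D$ and feed it back in as a shadow play of $H'(J)$, checking legality because the shadow prefix is one of those unioned. The only difference is that you spell out the bookkeeping rule for interleaving $\bar m$ with \emph{pass} moves explicitly (the $j(n)$ counter), whereas the paper leaves this step implicit.
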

  \begin{proof}
    Let $\sigma$ be a winning strategy for Player II in
    $H'(J)$. We describe a strategy $\sigma'$ for Player II
    in the game $H(J)$. Suppose Player II is about to make
    his $n$-th move after Player I has played
    $k_0,\ldots,k_{n-1}$. Let $F$ be the finite set of all
    sequences $m_0,\ldots,m_n$ such that $m_i\in
    i\cup\{$pass$\}$ and for each $(m_0,\ldots,m_{n-1})\in
    F$ let $a_f$ be the $n$-th move in the game $H'(J)$
    according to the strategy $\sigma$ after Player I has
    played $(k_0,m_0),\ldots,(k_{n-1},m_{n-1})$. Let the
    move of Player II in $H(J)$ be $\bigcup_{f\in F}a_f$.

    We claim that this is a winning strategy for Player
    II. Suppose it is not and there is a counterplay of
    Player I. The counterplay is a sequence $(k_n:n<\omega)$
    such that $\{k_n:n<\omega\}\notin J$. We will find a
    counterplay to the strategy $\sigma$ in $H'(J)$. Since
    $\{k_n:n<\omega\}\notin J$, there is a sequence
    $(m_n:n<\omega)\in\baire$ such that
    $(\{k_n:n<\omega\},(m_n:n<\omega))\in D$. Let $m_n'$ be
    the sequence such that $m_n\in i\cup\{$pass$\}$ and the
    elements of $m_n'$ different from 'pass' enumerate
    $(m_n:n<\omega)$. Consider now the play in which Player
    I plays $(k_n,m_n')$ and Player II plays according to
    $\sigma$. Note that this is a legal play in $H'(J)$ by
    the definition of $\sigma'$. It is also a counterplay to
    $\sigma$ in which Player I wins.
  \end{proof}

  \begin{claim}\label{strategy2}
    If $J$ is weakly selective, then Player II cannot have a
    winning strategy in $H(J)$.
  \end{claim}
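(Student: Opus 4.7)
The plan is to argue by contradiction and convert a hypothetical winning strategy $\sigma$ for Player II directly into a co-$J$-splitting tree, so that Lemma \ref{grigorieff} hands us a winning counterplay for Player I. This is the same translation Grigorieff used for selectivity, specialized to the setting of $H(J)$.

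More precisely, assume toward a contradiction that $\sigma$ is a winning strategy for Player II in $H(J)$. Build a tree $T\subseteq\omega^{<\omega}$ by recursion: put $\emptyset\in T$, and for $t=(k_0,\ldots,k_{n-1})\in T$ let $a_t\in J$ denote Player II's response according to $\sigma$ after Player I has played $k_0,\ldots,k_{n-1}$; then declare $t^\smallfrown k\in T$ if and only if $k\in\omega\setminus a_t$. By construction $\mathrm{split}_T(t)=\omega\setminus a_t$, and since $a_t\in J$ this set lies in co-$J$. Thus $T$ is co-$(J\restriction\omega)$-splitting. By Lemma \ref{grigorieff} applied with $b=\omega$, weak selectivity of $J$ yields a $J$-positive branch $(k_0,k_1,\ldots)\in[T]$. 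By the very definition of $T$, this branch is a legal play of $H(J)$ in which Player II follows $\sigma$ and the outcome $\{k_n:n<\omega\}$ is $J$-positive, so Player I wins. This contradicts the assumption that $\sigma$ was winning.

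The only genuine content to verify is the identification of Player II-strategies with co-$J$-splitting trees; once one notices that at every node the legal responses of Player I form the complement of a set in $J$, everything reduces to invoking Lemma \ref{grigorieff}. No additional descriptive-set-theoretic or combinatorial machinery is needed for this claim, so I do not anticipate any real obstacle in the argument.
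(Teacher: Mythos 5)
Your proof is correct and follows essentially the same route as the paper: you form the tree of Player I's legal counterplays against $\sigma$, observe that its splitting sets are the complements of Player II's moves and hence co-$J$, and then invoke Lemma \ref{grigorieff} (with $b=\omega$) to extract a $J$-positive branch, contradicting that $\sigma$ is winning. The paper's argument is exactly this, stated more tersely.
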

  \begin{proof}
    Suppose there is such strategy and let $T$ be the tree
    of all counterplays of Player I,
    i.e. $T=\{(k_0,\ldots,k_n): n<\omega$ and
    $k_i\notin\sigma(k_0,\ldots,k_{i-1})$ for each $i\leq
    k\}$. Note that $T$ is a co-$J$-splitting tree whose all
    branches belong to $J$. But by Lemma \ref{grigorieff},
    any co-$J$-splitting tree must have a $J$-positive
    branch.
  \end{proof}
   
  \begin{claim}\label{family}
    If $J$ is weakly selective, then for every $b\notin J$
    there is a countable family $Y$ of $J$-positive subsets
    of $b$ such that for every $a\in J$ there is $x\in Y$
    with $x\cap a=\emptyset$.
  \end{claim}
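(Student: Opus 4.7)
The plan is to apply the Gale--Stewart theorem to the game $H'(J)$. By Claims \ref{strategy1} and \ref{strategy2}, Player II has no winning strategy. The payoff set for Player I is closed in the space of runs (as $D$ is closed in $[\omega]^\omega\times\baire$; the condition that infinitely many $m_n$'s differ from \emph{pass} can be absorbed into $D$ via a standard reparametrization of the unfolding). Hence determinacy yields a winning strategy $\tau$ for Player I in $H'(J)$.

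Given $b\notin J$, I would first arrange that $\tau$ always plays $k_n\in b$. Consider the variant of $H'(J)$ in which Player I is required to choose $k_n\in b\setminus a_n$; this is well-defined because $b\setminus a_n\neq\emptyset$ for every $a_n\in J$. The arguments of Claims \ref{strategy1} and \ref{strategy2} carry over to this variant (using that $J\restriction b$ is still weakly selective), so Player I has a winning strategy $\tau_b$ whose $k$-moves all land in $b$.

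Using $\tau_b$, I would extract the countable family $Y$ as the set of outcomes of $\tau_b$ against a countable collection of Player II plays. Concretely, for each finite sequence $s=(F_0,\ldots,F_{k-1})$ of finite subsets of $b$ (indexed by the countable set $(\Fin\cap\mathcal{P}(b))^{<\omega}$), let $y_s$ be the set $\{k_n:n<\omega\}$ produced when Player II plays $F_0,\ldots,F_{k-1},\emptyset,\emptyset,\ldots$ against $\tau_b$. Each $y_s$ is a $J$-positive subset of $b$ by the winning property of $\tau_b$, so $Y=\{y_s\}$ is a countable family of the required kind.

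The main obstacle is the covering property: for every $a\in J$ one must exhibit $y_s\in Y$ with $y_s\cap a=\emptyset$. The natural certificate is the play in which Player II plays $a$ in every round, against which $\tau_b$ produces a $J$-positive set disjoint from $a$; but this play belongs to the countable family only when $a$ is finite. The plan for the reduction is to exploit weak selectivity through Lemma \ref{grigorieff}: one considers the co-$(J\restriction b)$-splitting tree of finite approximations to $a$ that are ``compatible with $\tau_b$ avoiding $a$'', and extracts from a $J$-positive branch of this tree (which exists by weak selectivity) a specific finite-approximation play $s$ whose outcome $y_s$ is disjoint from $a$. This tree argument is the crux of the proof and the step where coanalyticity of $J$ together with weak selectivity must be used in tandem.
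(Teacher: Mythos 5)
Your setup (apply Gale--Stewart to get a winning strategy for Player~I, restrict moves to $b$ by working with $J\restriction b$) matches the paper's opening moves. But the choice of $Y$ is wrong and the proposed repair does not close the gap.

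You take $Y$ to be the set of \emph{outcomes} $y_s=\{k_n:n<\omega\}$ of $\tau_b$ against plays of the form $(F_0,\ldots,F_{k-1},\emptyset,\emptyset,\ldots)$. The covering property then fails for a structural reason: from round $k$ on Player~II plays $\emptyset$, so the tail of every $y_s$ is produced with no reference to $a$ whatsoever. There is simply nothing in the construction that could force any $y_s$ to miss a given $a\in J$. (A blunt obstruction: if the strategy $\tau_b$ happens to always output the same first move $k_0=k^\ast$, then no $y_s$ is disjoint from $a=\{k^\ast\}\in J$.) Your proposed fix --- find a ``co-$(J\restriction b)$-splitting tree of finite approximations to $a$ compatible with $\tau_b$ avoiding $a$'' and take a $J$-positive branch --- does not address this: a $J$-positive branch of such a tree is not one of the countably many $y_s$, and in any case the issue is that no $y_s$ at all need avoid $a$, not that the ``right'' play fails to be finitely approximable.

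The paper's resolution is to take for $Y$ a different countable family attached to the strategy: the \emph{splitting sets} of the tree $S$ of partial plays for Player~I (after uniformizing the hidden $a$- and $m$-coordinates). One shows $S$ is $(J\restriction b)^+$-splitting and all its branches are $J$-positive. The covering property is then immediate by contraposition: if $a$ meets every splitting set $\mathrm{split}_S(t)$, one can recursively build a branch of $S$ contained in $a$; branches being $J$-positive gives $a\notin J$. So the key object is the levelwise choice sets, not the run outcomes, and that change is what makes the argument go through.
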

  \begin{proof}
    Given a set $b\notin J$ consider the ideal
    $J\restriction b=\{a\subseteq b: a\in J\}$ of subsets of
    $b$. Note that it is still weakly selective and
    coanalytic. Hence, by Claims \ref{strategy1} and
    \ref{strategy2} and the fact that the game
    $H'(J\restriction b)$ is a closed game, there is a
    winning strategy $\sigma$ for Player I in
    $H'(J\restriction b)$. Let $T$ be the tree of all
    partial plays in $H'(J\restriction b)$, i.e. all
    sequences $(a_0,n_0,m_0,a_1,n_1,m_1,\ldots,a_k,n_k,m_k)$
    such that
    $(n_i,m_i)=\sigma((a_0,n_0,m_0,\ldots,a_i))$. Now,
    inductively, for each $k$ find a subset $T_k'$ of $T$
    consisting of sequences
    $(a_0,n_0,m_0,\ldots,a_k,n_k,m_k)$ such that:
    \begin{itemize}
    \item $(a_0,n_0,m_0,\ldots,a_{k-1},n_{k-1},m_{k-1})$
      belongs to $T_{k-1}'$,
    \item if $(a_0,n_0,m_0,\ldots,a_k,n_k,m_k)$ and
      $(a_0',n_0,m_0,\ldots,a_k',n_k,m_k)$ belong to $T_k'$, then
      $a_i=a_i'$ for each $i\leq k$.
    \end{itemize}
    Let $T'=\bigcup_k T_k'$ and put
    \begin{align*}
      S=\{(n_0,\ldots,n_k)\in\omega^{<\omega}:\exists(a_0,\ldots,a_k),(m_0,\ldots,m_k)\\
      (a_0,n_0,m_0,\ldots,a_k,n_k,m_k)\in T'\}.
    \end{align*}
    The tree $S$ is a subtree of $b^{<\omega}$, whose all
    branches are $(J\restriction b)$-positive, as if
    $(n_0,n_1,\ldots)\in[S]$, then there are
    $(a_0,a_1,\ldots),(m_0,m_1,\ldots)$ such that
    $(a_0,n_0,m_0,a_1,n_1,m_1,\ldots)\in[T']$ and all
    branches through $T'$ follow the strategy $\sigma$. The
    tree $S$ is also $(J\restriction
    b)^+$-splitting. Indeed, if $a=\mathrm{split}_S(t)\in J$
    for some $t=(n_0,\ldots,n_k)\in S$, then pick
    $(a_0,\ldots,a_k)$ and $(m_0,\ldots,m_k)$ such that
    $(a_0,n_0,m_0,\ldots,a_k,n_k,m_k)\in T_k'$. Let then
    $$(n,m)=\sigma((a_0,n_0,m_0\ldots,a_k,n_k,m_k,a)).$$ Note
    that $n\notin a$ and by the construction, there is $a'$
    such that $$(a_0,n_0,m_0\ldots,a_k,n_k,m_k,a',n,m)\in
    T_{k+1}'.$$ So $(n_0,\ldots,n_k,n)\in S$ and this
    contradicts the fact that $n\notin a$.

    Consider now the family of all splitting sets of $S$. We
    claim that this is the desired family. Indeed, if $a$
    intersects all these sets, then $a$ contains a branch
    through $S$ and therefore, it is $J$-positive.
  \end{proof}
  
  For each $b\notin J$ let $Y_b$ be a countable family of
  subsets of $b$ as in Claim \ref{family}. We say that a
  family $X$ of $J$-positive sets is \textit{almost
    separating} if
  \begin{itemize}
  \item for every $x\in X$ and $n\in x$ there is $y\in X$
    with $y\subseteq x$ and $n\notin y$,
  \item for every $x\in X$ there are $x_0,x_1\in X$ with
    $x_0,x_1\subseteq x$ such that $x_0\cap x_1=\emptyset$ and
    $x\setminus(x_0\cup x_1)\in J$.
  \end{itemize}
  We also say that a family $X$ of $J$-positive sets is
  \textit{almost closed under finite intersections} if
  \begin{itemize}
  \item every finite intersection of elements of $X$ is
    either empty or $J$-positive,
  \item for every $x_0,\ldots,x_n\in X$ if $\bigcap_{i\leq
      n}x_i\notin J$, then there is $y\in X$ such that
    $y\subseteq \bigcap_{i\leq n}x_i$ and $(\bigcap_{i\leq
      n}x_i)\setminus y\in J$.
  \end{itemize}
  \begin{claim}\label{final}
    For each $b\notin J$ there is a countable family $X$ of
    $J$-positive subsets of $b$ which is almost closed under
    finite intersections, almost separating and such that
    for each $x\in X$ and $a\in J$ there is $y\in X$ with
    $y\subseteq x$ and $y\cap a=\emptyset$.
  \end{claim}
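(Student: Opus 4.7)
The plan is to build $X=\bigcup_n X_n$ as an increasing union of finite families of $J$-positive subsets of $b$, starting from $X_0=\{b\}$, and processing a countable list of tasks via a fair bookkeeping. Throughout the construction we maintain the invariant that every finite intersection of elements of $X_n$ is either empty or $J$-positive.

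There are three kinds of tasks. First, for each $x$ that eventually appears in some $X_n$ and each $k<\omega$, add a modification of the $k$-th element of $Y_x$, where $Y_x$ is the countable family given by Claim \ref{family} applied to $x$; this will eventually yield the strong separation property stated in Claim \ref{final}, and as a special case with $a=\{n\}$ it also gives the first clause of being almost separating. Second, for each $x\in X$ we add two disjoint $J$-positive subsets of $x$ whose union equals $x$; this is possible because $J\restriction x$ is coanalytic, hence has the Baire property, and is therefore not maximal, so $x$ splits into two $J$-positive pieces. Third, for each finite tuple $x_1,\ldots,x_k\in X$ with $\bigcap_{i} x_i\notin J$, we add a modification of $\bigcap_i x_i$ itself, yielding the approximation clause in almost-closure under finite intersections.

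The key technical move is the modification. Whenever we wish to add a $J$-positive set $y$ to the current finite family $X_n$, we instead add $y'=y\setminus A$, where
\[
A=\bigcup\bigl\{z\cap y : z \text{ is a finite intersection of elements of } X_n,\ z\cap y\in J\setminus\{\emptyset\}\bigr\}.
\]
Since $X_n$ is finite the collection of such $z$ is finite, so $A\in J$ as a finite union of sets in $J$, and hence $y'\notin J$. A routine calculation shows that for every finite intersection $z$ of elements of $X_n$ we have $y'\cap z=\emptyset$ when $z\cap y\in J$, and $y'\cap z\notin J$ when $z\cap y\notin J$ (in the latter case $A\cap z\cap y$ is a finite union of sets of the form $z'\cap y$ with $z'\cap y\in J$, hence in $J$). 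Thus the invariant is preserved by the augmentation $X_{n+1}=X_n\cup\{y'\}$, and crucially the properties of $y$ relevant to its task are inherited by $y'$: we have $y'\subseteq y$, and $y'\cap a=\emptyset$ whenever $y\cap a=\emptyset$.

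Once every task is processed, $X=\bigcup_n X_n$ satisfies all the required conditions. The almost-split property follows from the second kind of task because the two disjoint subsets $x_0,x_1$ produced there remain disjoint after modification and their union still covers $x$ modulo $J$. Strong separation follows from the first kind: given $x\in X$ and $a\in J$, the family $Y_x$ contains some $y^x_k$ with $y^x_k\cap a=\emptyset$, and its modification in $X$ still avoids $a$ and is contained in $x$. Almost-closure under intersections follows from the third kind: the modification of $\bigcap x_i$ satisfies $(\bigcap x_i)\setminus y'=A\in J$. The main obstacle is ensuring that the modification $y\mapsto y'$ simultaneously preserves $J$-positivity and makes all finite intersections with existing members well-behaved; this works cleanly precisely because $X_n$ stays finite at every stage, so that $A$ is always a finite union of sets in $J$.
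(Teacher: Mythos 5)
Your proof is correct, and it follows the same overall strategy as the paper's: build $X$ incrementally by a countable bookkeeping that processes (a) the families $Y_x$ from Claim \ref{family} for each $x$ that enters $X$, (b) a split of each $x$ into two disjoint $J$-positive pieces, and (c) finite intersections of elements already in $X$, pruning each new set so that the invariant ``every finite intersection of members of $X$ is empty or $J$-positive'' is maintained. The one substantive difference is in the pruning step. The paper defines $x_{i+1}=z_i\setminus\bigcup\{z_i\cap x_j: j<i\text{ and }z_i\cap x_j\in J\}$, i.e.\ it prunes only against \emph{pairwise} intersections with previously built sets, and then asserts that the required properties of $X$ follow. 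Your pruning removes $z\cap y$ for every finite intersection $z$ of $X_n$ with $z\cap y\in J\setminus\{\emptyset\}$, which is strictly stronger. This matters: with pairwise pruning alone it is not clear that higher-order intersections come out empty or $J$-positive. For instance, with $J=\Fin$, take pairwise disjoint infinite sets $a,b',c$ and a point $p$, and let $z_0=a\cup b'\cup\{p\}$, $z_1=a\cup c\cup\{p\}$, $z_2=b'\cup c\cup\{p\}$; all pairwise intersections are infinite, so pairwise pruning changes nothing, yet $z_0\cap z_1\cap z_2=\{p\}$ is a nonempty member of $J$. Your finite-intersection pruning detects $(z_0\cap z_1)\cap z_2\in J$ and removes $p$ when $z_2$ is added, so the invariant-preservation check is immediate and clean. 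Since $X_n$ is finite at every stage this costs nothing, and it arguably makes the argument more transparent than the paper's version; you have, in effect, repaired a small gap.
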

  \begin{proof}
    First note that since $J\restriction x$ is not a maximal
    ideal (as it has the Baire property) for every
    $J$-positive set $x\subseteq b$, there are two disjoint
    complementary $J$-positive subsets of $x$, say
    $x(0),x(1)$. Moreover, there is a countable family $S_x$
    of $J$-positive subsets of $x$ which separates points in
    $x$. Let $Z$ be a countable family of $J$-positive
    subsets of $b$ such that $b\in Z$ and
    \begin{itemize}
    \item for every $c\in Z$ we have $Y_c\subseteq Z$,
    \item for every $c\in Z$ we have $c(0),c(1)\in Z$ and
      $S_c\subseteq Z$,
    \item for every $c_0,\ldots,c_n\in Z$ if $\bigcap_{i\leq
        n}c_i\notin J$, then $\bigcap_{i\leq n}c_i\in Z$.
    \end{itemize}
    Enumerate $Z$ with infinite repetitions as
    $\{z_0,z_1,\ldots\}$. Now, by induction construct sets
    $x_i$ as follows. Let $x_0=z_0$ and
    $x_{i+1}=z_i\setminus\bigcup\{z_i\cap x_j: j<i\mbox{ and
    }z_i\cap x_j\in J\}$. Now, the family
    $X=\{x_i:i<\omega\}$ is as needed. Indeed, note that for
    each $i<\omega$ we have $x_i\subseteq z_i$ and
    $z_i\setminus x_i\in J$, so the properties of the family
    $X$ follow immediately from the construction of the
    family $Z$.
  \end{proof}

  Given a set $b\notin J$ let $X_b$ be a countable family of
  $J$-positive subsets of $b$ as in Claim
  \ref{final}. Let $$J_b=\{a\subseteq\omega:\forall x\in
  X_b\ \exists y\in X_b\quad y\subseteq x\ \wedge\ a\cap
  y=\emptyset\}.$$ Note that $J_b$ is an ideal of subsets of
  $\omega$.

  \begin{claim}
    For each $b\notin J$ the ideal $J$ is contained in
    $J_b$.
  \end{claim}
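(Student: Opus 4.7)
The plan is to unwind definitions: the conclusion $J \subseteq J_b$ is essentially immediate from the final property of $X_b$ guaranteed by Claim \ref{final}. Fix $b \notin J$ and let $a \in J$ be arbitrary. To show $a \in J_b$, by definition of $J_b$ I need to verify that for every $x \in X_b$ there exists $y \in X_b$ with $y \subseteq x$ and $y \cap a = \emptyset$.

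But Claim \ref{final} was stated precisely so as to supply this: it guarantees that for each $x \in X_b$ and each $a \in J$, there is $y \in X_b$ with $y \subseteq x$ and $y \cap a = \emptyset$. Applying this with the given $a \in J$ and any $x \in X_b$ directly yields the required witness.

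There is essentially no obstacle here; the lemma is a bookkeeping step recording that the construction of the family $X_b$ in Claim \ref{final} was tailored exactly to make $J \subseteq J_b$ hold. The real work has already been done in building $X_b$ (via the game $H'(J)$, Claims \ref{strategy1}, \ref{strategy2}, and \ref{family}, and the closure properties arranged in Claim \ref{final}). The substantive content of the overall argument lies in verifying the reverse-type statement that will presumably follow, namely that each $J_b$ has a topological representation and that $J = \bigcap_{b \notin J} J_b$; the present claim only records one of the two inclusions of that intersection.
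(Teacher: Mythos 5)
Your proof is correct and follows the same route as the paper, which dispatches the claim with the one-liner ``this follows directly from the properties of $X_b$''; you have simply spelled out explicitly which property of $X_b$ from Claim \ref{final} is being invoked and how it matches the defining condition of membership in $J_b$.
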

  \begin{proof}
    This follows directly from the properties of $X_b$.
  \end{proof}

  \begin{claim}
    For each $b\notin J$ the ideal $J_b$ has a topological
    representation.
  \end{claim}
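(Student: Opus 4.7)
My plan is to invoke Theorem \ref{char} and show that $J_b$ is both dense and countably separated.

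For countable separation I would use the family $X_b$ itself. Given $a\in J_b$ and $d\notin J_b$, fix $x\in X_b$ witnessing $d\notin J_b$, that is, such that $d\cap z\neq\emptyset$ for every $z\in X_b$ with $z\subseteq x$. Applying the defining property of $J_b$ to $a$ and this $x$ yields $y\in X_b$ with $y\subseteq x$ and $y\cap a=\emptyset$. Then $y\cap a=\emptyset$ by choice, while $y\cap d\notin J_b$: indeed, every $z\in X_b$ with $z\subseteq y$ also satisfies $z\subseteq x$, so $d\cap z\neq\emptyset$, and since $z\subseteq y$ this gives $(y\cap d)\cap z=d\cap z\neq\emptyset$, witnessed by $x'=y$.

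For density, let $a\subseteq\omega$ be infinite. If $a\setminus b$ is infinite, it is automatically in $J_b$, because every element of $X_b$ lies in $b$ and so misses any subset of $\omega\setminus b$. Otherwise we may assume $a\subseteq b$; if $a\in J$, then $a\in J_b$ by the previous claim. The remaining case is $a\subseteq b$ with $a\notin J$. Here I would enumerate $X_b=\{x_k:k<\omega\}$ and inductively construct $y_k\in X_b$ and $n_k\in a$ with $y_k\subseteq x_k$, $y_k\cap\{n_0,\ldots,n_{k-1}\}=\emptyset$, and $n_k\in a\setminus(y_0\cup\cdots\cup y_k)$ not equal to any earlier $n_i$. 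Then $c=\{n_k:k<\omega\}\subseteq a$ is infinite and satisfies $c\cap y_k=\emptyset$ for each $k$, so $c\in J_b$.

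The main obstacle is keeping $a'=a\setminus(y_0\cup\cdots\cup y_{k-1})$ infinite throughout the induction so that the choice of $n_k$ is possible. At stage $k$ I apply the second part of the almost-separating property to split $x_k=z_0\sqcup z_1\sqcup r$ with $z_0,z_1\in X_b$ and $r\in J$. If $a'\cap x_k$ is finite, any $y_k\subseteq x_k$ preserves infiniteness of $a'\setminus y_k$. Otherwise, since $a'\cap x_k=(a'\cap z_0)\sqcup(a'\cap z_1)\sqcup(a'\cap r)$ is infinite, at least one of the three pieces is infinite. If one of $a'\cap z_0,a'\cap z_1$ is infinite, I take $y_k$ to be the other of $z_0,z_1$, so that $a'\setminus y_k$ contains an infinite piece. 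If both $a'\cap z_0$ and $a'\cap z_1$ are finite, then $a'\cap r$ is infinite; but $r\in J$, so $a'\cap r$ is an infinite $J$-subset of $a$ already lying in $J\subseteq J_b$, and the construction terminates with this set serving as the desired $c$. Finally, to achieve $y_k\cap\{n_0,\ldots,n_{k-1}\}=\emptyset$ I shrink $y_k$ via the third property of $X_b$ applied with the finite (hence $J$-) set $\{n_0,\ldots,n_{k-1}\}$; this only makes $y_k$ smaller, so $a'\setminus y_k$ remains infinite, and a fresh $n_k$ can be chosen.
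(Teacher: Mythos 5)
Your proof is correct. The countable-separation argument is essentially the paper's own: pick the witness $x\in X_b$ for $d\notin J_b$, use the definition of $J_b$ to find $y\subseteq x$ in $X_b$ with $y\cap a=\emptyset$, and observe that $y$ itself witnesses $y\cap d\notin J_b$.

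The density argument, however, takes a genuinely different route. The paper fixes the witness $x\in X_b$ for $c\notin J_b$, builds a \emph{decreasing} chain $y_0\supseteq y_1\supseteq\cdots$ inside $x$ with $n_i\in y_i\cap c$ (so each $y_i\cap c\notin J_b$ is automatically infinite), and for each $x_k$ takes the witness to be a member of $X_b$ sitting inside $x_k\setminus y_k$, which can only meet $a$ in $\{n_0,\dots,n_{k-1}\}$ and is then shrunk to avoid that finite set. You instead take the \emph{dual} tack: choose the $n_k$'s outside $y_0\cup\dots\cup y_k$, so that each $y_k$ is disjoint from the final set $c$ and serves directly as the witness for $x_k$. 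The price is that you must maintain an infinite pool $a'=a\setminus(y_0\cup\dots\cup y_{k-1})$, which you do by splitting $x_k=z_0\sqcup z_1\sqcup r$ via almost-separation and choosing $y_k$ on the opposite side of the infinite part of $a'\cap x_k$, with the escape clause when $a'\cap r$ is infinite (then $a'\cap r\in J\subseteq J_b$ and you terminate early). Your version avoids both the decreasing chain and the use of the almost-closed-under-intersections property, making the induction a bit more self-contained, at the cost of the preliminary case split ($a\setminus b$ infinite, $a\in J$) and the extra bookkeeping on $a'$. Both routes are valid; yours is a clean alternative.
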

  \begin{proof}
    We will check that $J_b$ is countably separated and
    dense.
     
    $J_b$ is countably separated by $X_b$. Indeed, Let $a\in
    J_b$ and $c\notin J_b$. Since $c\notin J_b$, there is
    $x\in X_b$ such that for no $y\in X_b$ with $y\subseteq
    x$ it is the case that $y\cap c=\emptyset$. Note that
    actually for each such $y$ we have $y\cap c\notin
    J_b$. Now, since $a\in J_b$, there is $y\in X_b$
    with $y\subseteq x$ and $y\cap a=\emptyset$.

    To see that $J_b$ is dense, let $c\subseteq\omega$ be
    infinite. We need to find an infinite $a\subseteq c$
    such that $a\in J_b$. We can assume that $c\notin
    J_b$. This means that there is $x\in X_b$ such that for
    each $y$ with $y\subseteq x$ we have $y\cap c\notin
    J_b$. Enumerate $X_b$ as $\{x_i:i<\omega\}$. By
    induction on $i$, construct a strictly increasing
    sequence $n_i$ and $J$-positive sets $y_i\in X_b$ such
    that
    \begin{itemize}
    \item[(a)] $y_{i+1}\subseteq y_i$, $y_i\subseteq x$ (so
      $y_i\cap c\notin J_b$) and $n_i\in y_i\cap c$
    \item[(b)] $x_i\setminus y_i$ contains an element of
      $X_b$.
    \end{itemize}
    We start with $y_{-1}=x$. To perform the induction step,
    use the fact that $X_b$ is almost separating and find
    two $J_b$-positive sets $x_i(0),x_i(1)\in X_b$ which are
    subsets of $x_i$ and such that $x_i\setminus(x_i(0)\cup
    x_i(1))\in J$. If $y_{i-1}\cap x_i$ is empty, then put
    $y_i=y_{i-1}$ and pick any $n_i\in y_i\cap c$ bigger
    than $n_{i-1}$. If $y_{i-1}\cap x_i$ is nonempty, then
    it is $J$-positive. Note that at least one of
    $y_{i-1}\cap x_i(0)$ or $y_{i-1}\cap x_i(1)$ must be
    nonempty. Since $X_b$ is almost closed under finite
    intersections, one of these sets contains an element of
    $X_b$, say $y_i$. Pick any $n_i\in y_i\cap c$ bigger
    than $n_{i-1}$. This ends the construction.

    Put $a=\{n_i:i<\omega\}$. We claim that $a$ belongs to
    $J_b$. Indeed, pick $x_i\in X_b$. By (b) $x_i\setminus
    y_i$ contains an element of $X_b$, say $y$. Hence, by
    (a), $a\cap y$ is finite. Since $X_b$ almost separates
    points, we can further shrink $y$ to $z\in X_b$ such
    that $z\cap a$ is empty. This shows that $a\in J_b$.
  \end{proof}
  Now, for each $b\notin J$ we have the ideal $J_b$ such
  that $J\subseteq J_b$, $b\notin J_b$ and $J_b$ has a
  topological representation. This implies that
  $J=\bigcap_{b\notin J}J_b$ is an intersection of ideals
  which have topological representations and ends the proof.
\end{proof}

\bibliographystyle{plain}
\bibliography{refs}

\end{document}